\documentclass[final, sort&compress]{elsarticle} 
\usepackage[text={14.7cm, 22.5cm}, centering] {geometry} 

\usepackage{amsmath,amsthm,amsfonts,amssymb,cases}
\usepackage{enumerate, paralist}
\usepackage{longtable,tabularx,booktabs}
\usepackage{tikz} 

\usepackage{xcolor}
\usepackage[colorlinks,allcolors=teal]{hyperref} 


\newcommand{\F}{\mathbb{F}}

\newtheorem{thm}{Theorem}
\newtheorem{cor}{Corollary}
\newtheorem{lem}{Lemma}


\begin{document}
\begin{frontmatter}

\title{
On inverses of some permutation polynomials over finite \\fields of characteristic three
\tnoteref{foot}}
\tnotetext[foot]{
This paper has been published by ELSVIER available at
\href{https://doi.org/10.1016/j.ffa.2020.101670}
{https://doi.org/10.1016/j.ffa.2020.101670}.
Please refer to this paper as: \color{purple}{
Y. Zheng, F. Wang, L. Wang, W. Wei. On inverses of some permutation polynomials over finite fields of characteristic three. Finite Fields and Their Applications, 66:101670, 2020.
}}

\author[QF,DG,GL]{Yanbin Zheng}

\author[GL]{Fu Wang}

\author[JN]{Libo Wang\corref{cor}}%

\author[DG]{Wenhong Wei}

\address{\textnormal{Corresponding author: Libo Wang, Email: wanglibo12b@mails.ucas.edu.cn}}
\address[QF]{School of Mathematical Sciences, Qufu Normal University, Qufu, China}
\address[DG]{School of Computer Science and Technology,
         Dongguan University of Technology, Dongguan, China}
\address[GL]{Guangxi Key Laboratory of Cryptography and Information Security, Guilin University of Electronic Technology, Guilin, China}
\address[JN]{College of Information Science and Technology, Jinan University, Guangzhou, China}

\begin{abstract}
  By using the piecewise method, Lagrange interpolation formula and Lucas' theorem, we determine explicit expressions of the inverses of a class of reversed Dickson permutation polynomials and some classes of generalized cyclotomic mapping permutation polynomials over finite fields of characteristic three.
\end{abstract}
\begin{keyword}
 Permutation polynomials \sep Inverses \sep Reversed Dickson polynomials
\MSC[2010]  11T06
\end{keyword}

\end{frontmatter}

\section{Introduction}
For $q$ a prime power, let $\F_{q}$ denote the finite field with $q$ elements,
$\F_{q}^{*} =\F_{q}\setminus\{0\}$, and $\F_{q}[x]$ the ring of polynomials over~$\F_{q}$.
A polynomial $f(x) \in \F_{q}[x]$ is called a permutation polynomial (PP) of~$\F_{q}$
if it induces a bijection from $\F_{q}$ to itself. For any PP $f(x)$ of~$\F_{q}$,
there exists a polynomial $f^{-1}(x) \in \F_{q}[x]$ such that $f^{-1}(f(c)) =c$
for each $c \in \F_{q}$ or equivalently $f^{-1}(f(x)) \equiv x \pmod{x^{q} -x}$,
and the polynomial $f^{-1}(x)$ is unique in the sense of reduction modulo $x^q -x$.
Hence $f^{-1}(x)$ is defined as the composition inverse of $f(x)$,
and we simply call it the inverse of $f(x)$ on $\F_{q}$.

Recently, some classes of PPs are found;
see for example~\cite{NLi17-tri,DWu17,Zha17-tri,TuZ18,XuCP18}
for PPs of the form $x^rh(x^{q-1})$ of $\F_{q^2}$,
\cite{LWang18,DZheng19} for PPs of the form $(x^{q} -x +c)^s +L(x)$ of $\F_{q^2}$,
\cite{LLi18,Zheng-DCC,XuFZ19} for PPs of the form
$(ax^{q} +bx +c)^r\phi((ax^{q} +bx +c)^s) +ux^{q}+vx$ of $\F_{q^2}$,
\cite{LiQSL19} for PPs with low boomerang uniformity,
and \cite{Bartoli18,Hou18,ChouH19} for PPs studied
using the Hasse-Weil bound and Hermite's criterion.
For a detailed introduction to the developments on PPs,
we refer the reader to~\cite{Hou15,WangIndex19} and the references therein.

The construction of PPs of finite fields is not an easy subject.
However, the problem of determining the inverse of a PP seems to be
an even more complicated problem. In fact, there are a few known classes of
PPs whose inverses have been obtained explicitly;
see for example~\cite{KLi18-1, MR-1, Wang-1,Zheng-2}
for PPs of the form $x^rh(x^{(q-1)/d})$,
\cite{Wu-L-bi, Wu-L} for linearized PPs,
\cite{Wang-cyc2,Zheng-2} for generalized cyclotomic mapping PPs,
\cite{Zheng-1} for general piecewise PPs,
\cite{Involutions, NiuLQW19, ZhengYL+2019}
for involutions over finite fields,
and~\cite{TW-1,TW17} for more general classes of PPs.
For a brief summary of the results concerning the inverses of PPs,
we refer the reader to~\cite{Zheng-3} and the references therein.

The Dickson polynomial $D_{n}(x,a)$ of degree $n$
in the indeterminate $x$ and with parameter $a \in \F_{q}$ is given as
\[
D_{n}(x,a) = \sum_{i=0}^{\lfloor n/2\rfloor}
             \frac{n}{n-i} \binom{n-i}{i}(-a)^i x^{n-2i},
\]
where $\lfloor n/2\rfloor$ denotes the largest integer $\leq n/2$,
and the term $\frac{n}{n-i} \binom{n-i}{i}$ is an integer.
By reversing the roles of the indeterminate $x$ and the parameter $a$ in $D_{n}(x,a)$,
the $n$-th reversed Dickson polynomials $D_{n}(a,x)$ was defined in~\cite{Hou-RDP09} by
\begin{equation*}
D_{n}(a, x) = \sum_{i=0}^{\lfloor n/2\rfloor}\frac{n}{n-i}\binom{n-i}{i}(-1)^{i}a^{n-2i}x^{i},
\end{equation*}
and their permutation properties were studied in~\cite{Hou-RDP09}.
Several families of reversed Dickson PPs over finite fields were given in~\cite{Hou-RDP09, Hou11},
which covered all the reversed Dickson PPs over $\F_{q}$ with $q < 200$.
Then, the notion of (reversed) Dickson polynomials of the $(k+1)$-th kind was introduced
in~\cite{Wang-DP12}, and the factorization and the permutation behavior of Dickson polynomials of the third kind were studied in~\cite{Wang-DP12}.
Some necessary conditions for reversed Dickson polynomials of the first and second kinds
to be PPs of finite fields were given in~\cite{Hou-RDP10} and \cite{HongQZ16}, respectively.
The permutation behavior of reversed Dickson polynomials of the $(k+1)$-th kind was further investigated in~\cite{Fernando-RDP17,Fernando-RDP18,Fernando17arXiv}.
In particular, Hou~\cite{Hou11} proved the following result.

\begin{lem}[\textup{\cite[Theorem1.1]{Hou11}}] \label{PP-3^n}
Let $n$ be a positive even integer. Then
\begin{equation*}\label{f-hou}
    f(x)=(x -x^2 -x^3)x^{\frac{3^{n}-1}{2}} -x +x^2
\end{equation*}
is a PP of $\F_{3^n}$.
\end{lem}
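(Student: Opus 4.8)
The plan is to exploit the fact that the exponent $(3^n-1)/2$ turns $x^{(3^n-1)/2}$ into (essentially) the quadratic character of $\F_q$, where $q=3^n$, so that $f$ becomes a genuinely piecewise map that can be analyzed on the squares and non-squares separately. First I would partition $\F_q$ into $\{0\}$, the set $C_0$ of nonzero squares, and the set $C_1$ of non-squares, using that $x^{(q-1)/2}$ equals $0$, $1$, $-1$ on these three sets respectively. Substituting these values and simplifying in characteristic three (where $2=-1$) gives the piecewise description
\begin{equation*}
f(x)=\begin{cases}-x^3, & x\in\{0\}\cup C_0,\\ x^3-x^2+x, & x\in C_1.\end{cases}
\end{equation*}
A useful preliminary observation is that, since $n$ is even, $q=3^n\equiv 1\pmod 4$, so $-1$ is a square in $\F_q$; I would record this at the outset because it is exactly what makes the two pieces fit together.

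On the square part $S:=\{0\}\cup C_0$ the map is $x\mapsto -x^3$. Since $x\mapsto x^3$ is the Frobenius automorphism it is a bijection of $\F_q$ preserving the set of nonzero squares, and multiplication by $-1$ also preserves squares because $-1$ is a square; hence $f$ restricts to a bijection $S\to S$. Next I would check that the non-square part maps into itself: rewriting $x^3-x^2+x=x(x^2-x+1)=x(x+1)^2$ (using $x^2-x+1=(x+1)^2$ in characteristic three), for $x\in C_1$ one has $x\ne 0$ and $x\ne -1$ (the latter because $-1$ is a square), so $(x+1)^2$ is a nonzero square and $x(x+1)^2$ is a non-square. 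Thus $f(C_1)\subseteq C_1$.

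The main obstacle is injectivity on $C_1$; once that is in hand, finiteness forces $f|_{C_1}$ to be a bijection of $C_1$, and combined with $f(S)=S$ this shows $f$ permutes $\F_q$. To prove it I would suppose $g(x)=g(y)$ with $g(x)=x^3-x^2+x$ and $x,y\in C_1$. Using the characteristic-three identity $x^3-y^3=(x-y)^3$, the equation factors as
\begin{equation*}
(x-y)\bigl((x-y)^2-(x+y)+1\bigr)=0.
\end{equation*}
If $x\ne y$, writing $d=x-y\ne 0$ and solving the bracket for $x$ yields $x=-(d-1)^2$ (here one again uses $2^{-1}=-1$ and the factorization $d^2+d+1=(d-1)^2$). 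But $(d-1)^2$ is a square and $-1$ is a square, so $x=-(d-1)^2$ is a square or zero, hence never a non-square, contradicting $x\in C_1$. Therefore $x=y$, which completes the argument. The delicate point throughout is the repeated interplay between the characteristic-three perfect-square factorizations and the hypothesis (via $n$ even) that $-1$ is a square; it is precisely this that prevents the non-square piece from colliding.
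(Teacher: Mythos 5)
Your proof is correct. Note first that the paper itself offers no proof of this lemma: it is quoted verbatim from Hou's paper \cite{Hou11} and used as a black box. What the paper does do, in Section~3, is write down exactly the same piecewise decomposition you obtain,
\[
 f(x)=\begin{cases} 0 & x=0,\\ -x^3 & x\in C_0,\\ x(x+1)^2 & x\in C_1,\end{cases}
\]
but it runs the logic in the opposite direction: it \emph{assumes} the permutation property from Lemma~\ref{PP-3^n}, observes that $-1\in C_0$ because $n$ is even, and concludes that each piece must permute its own class $C_s$ --- which is the input needed for the inversion machinery of Lemma~\ref{inv-twop} and Theorem~\ref{f-1-C01}. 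You instead prove the permutation property directly from the piece analysis: bijectivity of $x\mapsto -x^3$ on $\{0\}\cup C_0$ via Frobenius and $\eta(-1)=1$; stability of $C_1$ under $x\mapsto x(x+1)^2$ since $x\neq 0,-1$ there; and injectivity on $C_1$ via the characteristic-three factorization $g(x)-g(y)=(x-y)\bigl((x-y)^2-(x+y)+1\bigr)$ and the observation that a collision would force $x=-(d-1)^2$ to be a square. Each step checks out (in particular $x^2-x+1=(x+1)^2$ and $d^2+d+1=(d-1)^2$ in characteristic $3$, and the $d=1$ case is covered since then $x=0$). Your argument is considerably more elementary and self-contained than Hou's original treatment in \cite{Hou11}, and it has the pleasant side effect of establishing, along the way, precisely the facts ($f_s$ permutes $C_s$ for $s\in\{0,1\}$) that the paper has to extract from the lemma before it can apply its inversion formulas.
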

Since the reversed Dickson polynomial $D_{3^{n}+5}(1,x) = f(1-x)-1$,
Hou equivalently proved that $D_{3^{n}+5}(1,x)$ is a PP of $\F_{3^n}$ for even $n$.

The purpose of this paper is to find the inverse of $f(x)$ in Lemma~\ref{PP-3^n}. The main idea is the combination of the piecewise method in~\cite{Zheng-2} and some techniques in~\cite{Zheng-3}.

The rest of the paper is organized as follows.
Section~2 gives a formula for the inverse of a class of piecewise PPs $f(x)$,
which converts the problem of determining the inverse of $f(x)$ on $\F_{q}$
to the problem of computing the inverse $f_{s}^{-1}(x)$ of piece function $f_{s}(x)$
when restricted to a subset $C_{s}$ of $\F_{q}$ for all $s$.
Then an expression of $f_{s}^{-1}(x)$ is presented in Theorem~\ref{f-1-C01},
which provides all the coefficients of $f_{s}^{-1}(x)$ by
computing the coefficients of $x^{(q-3)/2}$ and $x^{q-2}$
in $f_{s}(x)^{k} \pmod{x^{q}-x}$ for $1 \leq k \leq (q-3)/2$.
By applying the results in Section~2 to $f(x)$ in Lemma~\ref{PP-3^n}, the coefficients of $f^{-1}(x)$ are reduced into two classes of binomial coefficients in Section~3.
Section~4 gives explicit values of these binomial coefficients
by using a congruence of binomial coefficients and Lucas' theorem.
In other words, we determine the inverse of $f(x)$ as follows.
\begin{thm}\label{inv-hou}
The inverse of $f(x)$ in \textup{Lemma~\ref{PP-3^n}} on $\F_{3^n}$ is
\[
 f^{-1}(x)
= x^{3^{n-1}}\big(x^{\frac{3^{n}-1}{2}} +1\big)
+ \Bigg(\sum_{j=0}^{n-1}\sum_{k=0}^{n-1}
    (-1)^{j+k} x^{\frac{3^j+3^k}{2}} \Bigg)
    \big(x^{\frac{3^{n}-1}{2}} -1\big).
\]
Furthermore, the inverse of $D_{3^{n}+5}(1,x)$ on $\F_{3^n}$ is
$D_{3^{n}+5}^{-1}(1,x)= 1-f^{-1}(x+1)$.
\end{thm}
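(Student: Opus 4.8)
The plan is to recognize $f$ as a piecewise permutation polynomial and to invert it piece by piece. Writing $\chi(x)=x^{(3^n-1)/2}$, which equals $+1$ on the nonzero squares, $-1$ on the nonsquares, and $0$ at the origin, a direct substitution of $\chi(x)=\pm1$ into the definition of $f$ together with the identities $2=-1$ and $-2=1$ in characteristic three gives
\[
f(x)=
\begin{cases}
-x^3, & x\text{ a nonzero square},\\
x^3-x^2+x, & x\text{ a nonsquare},\\
0, & x=0.
\end{cases}
\]
Since $n$ is even, $3^n\equiv1\pmod4$, so $-1$ is a square in $\F_{3^n}$; as the cube map is a bijection of $\F_{3^n}^{*}$ preserving the quadratic character, the first piece maps the nonzero squares bijectively onto themselves. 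Because $f$ is a permutation (Lemma~\ref{PP-3^n}) fixing $0$, by elimination the second piece maps the nonsquares bijectively onto themselves. First I would invoke the piecewise inverse formula of Section~2 for the partition $\F_{3^n}=\{0\}\sqcup C_1\sqcup C_2$ into squares and nonsquares, which reduces the computation of $f^{-1}$ to inverting each piece on its image and recombining the results through the character selector $\chi$.

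The square piece is immediate: inverting $y=-x^3$ gives $x=(-y)^{3^{n-1}}=-y^{3^{n-1}}$, using $(-1)^{3^{n-1}}=-1$ and $y^{3^n}=y$. Packaging this Frobenius inverse with the selector $x^{(3^n-1)/2}+1$ (which equals $-1$ on the squares and $0$ on the nonsquares) produces exactly the term $x^{3^{n-1}}\big(x^{(3^n-1)/2}+1\big)$ of the claimed formula.

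The nonsquare piece is the heart of the matter. Here I would apply Theorem~\ref{f-1-C01} to $f_2(x)=x^3-x^2+x$ restricted to $C_2$, which expresses every coefficient of $f_2^{-1}$ through the coefficients of $x^{(3^n-3)/2}$ and $x^{3^n-2}$ in $f_2(x)^{t}\bmod(x^{3^n}-x)$ for $1\le t\le(3^n-3)/2$. Expanding $f_2(x)^{t}=x^{t}(x^2-x+1)^{t}=x^{t}(x+1)^{2t}$, valid since $x^2-x+1=(x+1)^2$ in characteristic three, and folding the exponents through $x^{3^n}\equiv x$, turns these two target coefficients into two explicit families of binomial coefficients $\binom{2t}{i}$. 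The final step, carried out in Section~4, is to evaluate these by means of a congruence of binomial coefficients together with Lucas' theorem in base $3$: the base-$3$ digit analysis forces each coefficient to vanish unless the associated exponent has the form $(3^j+3^k)/2$, in which case it contributes the sign $(-1)^{j+k}$. This collapses the sum to $\sum_{j,k}(-1)^{j+k}x^{(3^j+3^k)/2}$, and combining it with the selector $x^{(3^n-1)/2}-1$ (which equals $0$ on the squares and $-2=1$ on the nonsquares) yields the second term of $f^{-1}(x)$. I expect the base-$3$ digit bookkeeping inside Lucas' theorem, isolating precisely which exponents survive and with which sign, to be the main obstacle.

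Finally, the reversed Dickson statement follows by a one-line substitution. From $D_{3^n+5}(1,x)=f(1-x)-1$, setting $y=D_{3^n+5}(1,x)$ gives $f(1-x)=y+1$, hence $1-x=f^{-1}(y+1)$ and $x=1-f^{-1}(y+1)$; therefore $D_{3^n+5}^{-1}(1,x)=1-f^{-1}(x+1)$, as claimed.
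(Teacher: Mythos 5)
Your proposal is correct and follows essentially the same route as the paper: the same piecewise decomposition into $-x^3$ on squares and $x(x+1)^2=x^3-x^2+x$ on nonsquares, the same use of the piecewise inversion lemma and of Theorem~\ref{f-1-C01} to reduce $f_1^{-1}$ to the two families of binomial coefficients $\binom{2w_i}{u-w_i}$ and $\binom{2w_i}{v-w_i}$, and the same evaluation of these modulo $3$ via the congruence $\binom{q+m}{k}\equiv\binom{m}{k}$ and Lucas' theorem, with the identical one-line substitution for the reversed Dickson statement. The only part you defer --- the base-$3$ digit bookkeeping that shows the $\binom{3i}{i-1}$ family vanishes entirely while the other family survives exactly at $i=\frac{3^n+1}{2}+\frac{3^j-1}{2}+\frac{3^k-1}{2}$ with value $(-1)^{j+k}$ after the sign adjustment --- is precisely the content of the paper's Section~4.
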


In the last section, by an argument similar to the one used
in Theorem~\ref{inv-hou}, we also obtain explicit inverses of
some generalized cyclotomic mapping PPs studied in~\cite{Wang-cyc}.

\section{The inverse of a class of piecewise PPs}

The piecewise methods for constructing PPs and their inverses were summarized in \cite{FH-pw,CHZ14} and~\cite{Zheng-2} respectively.
Applying these methods, we can easily get the following~result.

\begin{lem}\label{inv-twop}
Let $q$ be odd and $f_{0}(x), f_{1}(x) \in \F_{q}[x]$. Define
\[
  f(x)
  = \tfrac{1}{2}f_{0}(x)(1+x^{\frac{q-1}{2}})
   +\tfrac{1}{2}f_{1}(x)(1-x^{\frac{q-1}{2}})
\quad \text{with $f(0) =0$,}
\]
$C_{0} = \{e^2 : e \in \F_{q}^{*}\}$, and $C_{1} = \F_{q}^{*} \setminus C_{0}$.
Then $f(x)$ is a PP of $\F_q$ if and only if $f_{s}$ is injective on $C_{s}$
and $0 \not\in f_{s}(C_{s})$ for $s \in \{0, 1\}$,
and $f_{0}(C_{0}) \cap f_{1}(C_{1}) = \emptyset$.
Assume $f(x)$ is a PP of $\F_q$, and $f_{s}^{-1}(x) \in \F_q[x]$ satisfies that
$f_{s}^{-1}(0) =0$ and $f_{s}^{-1} (f_{s}(c))=c$ for any $c \in C_{s}$ and
$s \in \{0, 1\}$.
\begin{enumerate}[\upshape(i)]
  \item If $f_{s}$ maps $C_{s}$ into $C_{s}$ for $s \in \{0, 1\}$,
        then the inverse of $f(x)$ on $\F_{q}$ is
        \begin{equation}\label{f-1-s}
            f^{-1}(x)
            = \tfrac{1}{2}f_{0}^{-1}(x)(1 + x^{\frac{q-1}{2}})
            +\tfrac{1}{2}f_{1}^{-1}(x)(1 - x^{\frac{q-1}{2}}).
        \end{equation}
  \item If $f_{s}$ maps $C_{s}$ into $C_{t}$ for $s \neq t \in \{0, 1\}$,
        then the inverse of $f(x)$ on $\F_{q}$ is
        \begin{equation*}
            f^{-1}(x)
            = \tfrac{1}{2}f_{0}^{-1}(x)(1 - x^{\frac{q-1}{2}})
            +\tfrac{1}{2}f_{1}^{-1}(x)(1 + x^{\frac{q-1}{2}}).
        \end{equation*}
\end{enumerate}
\end{lem}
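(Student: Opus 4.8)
The plan is to exploit the standard evaluation of the quadratic-character power: for $q$ odd and $x \in \F_q$ one has $x^{(q-1)/2} = 1$ when $x \in C_0$, $x^{(q-1)/2} = -1$ when $x \in C_1$, and $0^{(q-1)/2} = 0$. Consequently the two coefficient factors act as indicators: $\tfrac12(1 + x^{(q-1)/2})$ equals $1$ on $C_0$ and $0$ on $C_1$, while $\tfrac12(1 - x^{(q-1)/2})$ equals $0$ on $C_0$ and $1$ on $C_1$. Substituting these into the definition of $f$, together with the imposed normalization $f(0) = 0$, shows that as a map on $\F_q$ the polynomial $f$ agrees with $f_0$ on $C_0$, with $f_1$ on $C_1$, and sends $0$ to $0$. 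This reduces every assertion about the polynomial $f$ to a statement about the piecewise map it induces.

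First I would settle the permutation criterion. Since $f(0) = 0$ and $|C_0| + |C_1| = q - 1 = |\F_q^*|$, the map $f$ is a bijection of $\F_q$ if and only if it restricts to a bijection of $\F_q^* = C_0 \cup C_1$ onto itself; this happens exactly when $f_0|_{C_0}$ and $f_1|_{C_1}$ are injective, their images avoid $0$, and $f_0(C_0) \cap f_1(C_1) = \emptyset$. The forward implication is immediate, and a finite counting argument (an injection between finite sets of equal cardinality is a bijection) supplies the converse, yielding precisely the stated condition.

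For the two inverse formulas I would first upgrade ``maps into'' to ``maps onto'' by counting, using $|C_0| = |C_1| = (q-1)/2$. In case (i), $f_0(C_0) \subseteq C_0$ with $|f_0(C_0)| = |C_0|$ forces $f_0(C_0) = C_0$, and likewise $f_1(C_1) = C_1$; in case (ii) the same argument gives $f_0(C_0) = C_1$ and $f_1(C_1) = C_0$. Now I would verify the candidate inverse pointwise. Writing $g(x)$ for the right-hand side of (i): for $x \in C_0$ we have $x^{(q-1)/2} = 1$, so $g(x) = f_0^{-1}(x)$, and since $x \in C_0 = f_0(C_0)$ we may write $x = f_0(c)$ with $c \in C_0$, whence $g(f_0(c)) = f_0^{-1}(f_0(c)) = c$ by the defining property of $f_0^{-1}$. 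The computation on $C_1$ is identical using $x^{(q-1)/2} = -1$ and $f_1^{-1}$, and at $x = 0$ the hypothesis $f_s^{-1}(0) = 0$ gives $g(0) = 0 = f^{-1}(0)$. Hence $g$ inverts $f$ at every point of $\F_q$, so $g \equiv f^{-1} \pmod{x^q - x}$. Case (ii) is the same after swapping which residue class each $f_s^{-1}$ is evaluated on, reflecting that $f$ now interchanges $C_0$ and $C_1$.

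The computations are routine; the only point requiring genuine care is the counting step that converts the hypothesis ``$f_s$ maps $C_s$ into $C_s$ (resp. into $C_t$)'' into an \emph{equality} of images. It is precisely this surjectivity that guarantees every input $x$ fed to $f_s^{-1}$ in the formula actually lies in the domain $f_s(C_s)$ on which the interpolation property $f_s^{-1}(f_s(c)) = c$ is available, so the verification at the end goes through.
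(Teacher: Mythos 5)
Your proof is correct and is exactly the argument the paper has in mind: the paper omits the proof of this lemma, merely citing the piecewise method of \cite{FH-pw,CHZ14,Zheng-2}, and your use of $\tfrac12(1\pm x^{(q-1)/2})$ as indicator functions of $C_0$ and $C_1$, the counting step upgrading ``maps into'' to ``maps onto,'' and the pointwise verification of the candidate inverse is precisely that method carried out in full. Nothing is missing.
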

Lemma~\ref{inv-twop} converts the problem of  determining $f^{-1}(x)$
into the problem of computing $f_{s}^{-1}(x)$, the inverse of piece function
$f_{s}(x)$ when restricted to $C_{s}$. In Lemma~\ref{inv-twop},
if $q=3$, then $f(x) = f^{-1}(x)=\pm x$ in the sense of reduction modulo $x^{3}-x$.
We will give an expression of $f_{s}^{-1}(x)$ for $q>3$ after the following lemma.

\begin{lem}\label{sum}
For $q$ an odd prime power, let $C_0=\{e^2: e\in\F_{q}^{*}\}$
and $C_1=\F_{q}^{*} \setminus C_0$. Then
\[
\sum_{a \in C_{s}}a^k =
\begin{cases}
  \quad -2^{-1}  & \text{if ~$k = q-1$,} \\
  (-1)^{s+1} 2^{-1} & \text{if ~$k = (q-1)/2$,} \\
  \qquad 0      & \text{if $1 \leq k \leq q-2$ and $k \neq (q-1)/2$.}
\end{cases}
\]
\end{lem}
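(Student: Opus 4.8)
The plan is to reduce everything to the classical power-sum identity over the full multiplicative group and then split off the square and non-square parts. Specifically, I would first record the elementary fact that for any integer $k \ge 1$,
\[
\sum_{a \in \F_q^*} a^k =
\begin{cases}
 -1 & \text{if } (q-1) \mid k, \\
 \phantom{-}0 & \text{otherwise.}
\end{cases}
\]
This follows by fixing a generator $g$ of the cyclic group $\F_q^*$ and summing the geometric series $\sum_{i=0}^{q-2} (g^k)^i$, which collapses to $q-1 \equiv -1$ when $g^k = 1$ and to $0$ otherwise.

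Next, for $C_0$ I would exploit that the squaring map $e \mapsto e^2$ is exactly two-to-one from $\F_q^*$ onto $C_0$, since $q$ is odd and $e^2 = f^2$ forces $f = \pm e$ with $e \ne -e$. Hence
\[
\sum_{a \in C_0} a^k = \tfrac{1}{2} \sum_{e \in \F_q^*} e^{2k},
\]
and applying the power-sum identity with exponent $2k$ reduces the problem to deciding when $(q-1) \mid 2k$. In the range $1 \le k \le q-1$ this happens precisely for $k = (q-1)/2$ and $k = q-1$, giving $\sum_{a \in C_0} a^k = -2^{-1}$ in both of those cases and $0$ otherwise; this already matches the claimed values for $s = 0$, once one checks that $(-1)^{0+1} 2^{-1} = -2^{-1}$.

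Finally, for $C_1$ I would write $\sum_{a \in C_1} a^k = \sum_{a \in \F_q^*} a^k - \sum_{a \in C_0} a^k$ and feed in the two computations above. For $k = q-1$ this gives $-1 - (-2^{-1}) = -2^{-1}$, and for $1 \le k \le q-2$ with $k \ne (q-1)/2$ it gives $0 - 0 = 0$, both agreeing with the $s=1$ entries. The one point that requires care---and the only place where the sign $(-1)^{s+1}$ genuinely emerges---is the exponent $k = (q-1)/2$: here the full-group sum vanishes because $(q-1) \nmid (q-1)/2$, while the $C_0$ sum equals $-2^{-1}$, so $\sum_{a \in C_1} a^{(q-1)/2} = 0 - (-2^{-1}) = 2^{-1} = (-1)^{1+1} 2^{-1}$. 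Tracking this asymmetry correctly, together with keeping the divisibility bookkeeping for $2k$ straight at the two boundary exponents, is the main (though modest) obstacle.
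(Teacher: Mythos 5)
Your proof is correct, and it takes a slightly different route from the paper's. The paper works with $A=\sum_{a\in C_1}a^k$ first: since multiplication by a primitive element $\xi$ maps $C_1$ onto $C_0$, one gets $(\xi^k+1)A=\sum_{a\in\F_q}a^k$, which pins down $A$ whenever $\xi^k\neq-1$; the remaining exponent $k=(q-1)/2$ is handled by the direct observation that $a^{(q-1)/2}=-1$ on $C_1$, and the $C_0$ sum is then recovered from the total. You instead attack $C_0$ first via the two-to-one squaring map, writing $\sum_{a\in C_0}a^k=\tfrac12\sum_{e\in\F_q^*}e^{2k}$, and obtain $C_1$ by subtraction. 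Both arguments rest on the same full-group power-sum identity; yours has the small advantage of treating all exponents for $C_0$ uniformly (the case $(q-1)\mid 2k$ falls out of one computation rather than a separate evaluation), while the paper's version isolates more transparently why the sign flips between $C_0$ and $C_1$ exactly at $k=(q-1)/2$. Your divisibility bookkeeping at the boundary exponents and the final sign check $(-1)^{s+1}2^{-1}$ are all accurate.
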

\begin{proof}
Let $A=\sum_{a \in C_1}a^k$ and $\xi$ a primitive element of $\F_{q}$. Obviously,
\begin{equation}\label{sum-all}
(\xi^k +1)A
 = \sum_{a \in C_0}a^k + \sum_{a \in C_1}a^k
 = \sum_{a \in \F_{q}}a^k
 = \begin{cases}
  -1 & \text{if $k = q-1$,}\\
  ~~0  & \text{if $k = 1$, $2$, $\ldots$, $q-2$.}
  \end{cases}
\end{equation}
If $k = q-1$, then $\xi^k=1$ and $2A=-1$, so $A=-1/2$.
If $k = (q-1)/2 $, then $a^k = -1$ for $a \in C_1$.
Hence $A=(-1)(q-1)/2=1/2$. If $1 \leq k \leq q-2$ and $k \neq (q-1)/2$,
then $\xi^k \neq -1$ and $(\xi^k +1)A=0$. Thus $A=0$.
Then the result follows from~$\eqref{sum-all}$.
\end{proof}

\begin{thm}\label{f-1-C01}
For an odd prime power $q > 3$, let $C_0=\{e^2: e\in\F_{q}^{*}\}$
and $C_1=\F_{q}^{*} \setminus C_0$.
For $s, t \in \{0,1\}$, assume $f_{s}(x) \in \F_q[x]$
induces a bijection from $C_{s}$ to $C_{t}$, and
\begin{equation}\label{h-q-1-i}
   f_{s}(x)^{q-1-i} \equiv \sum_{0 \leq k \leq q-1} b_{ik} x^k \pmod{x^q-x},
\end{equation}
where $(q+1)/2 \leq i \leq q-2$.
Then the inverse of $f_{s}(x)$ when restricted to $C_{s}$ is
\begin{equation}\label{Rf2-1}
f_{s}^{-1}(x) = \sum_{(q+1)/2 \leq i \leq q-2}
   (-1)^{s+t}\big(b_{i,(q-3)/2} + (-1)^{s}b_{i,q-2} \big) x^{i-\frac{q-1}{2}}
\end{equation}
 in the sense of reduction modulo $x^{\frac{q-1}{2}} +(-1)^{t+1}$.
\end{thm}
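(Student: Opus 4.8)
The plan is to exploit that $C_t$ is precisely the set of roots of $x^{(q-1)/2}+(-1)^{t+1}$. Since $|C_t|=(q-1)/2$, every function on $C_t$ has a unique polynomial representative of degree less than $(q-1)/2$; in particular the inverse map $f_s^{-1}\colon C_t\to C_s$ is represented by some $\sum_{j=0}^{(q-3)/2}c_jx^j$ modulo $x^{(q-1)/2}+(-1)^{t+1}$, so it suffices to compute each $c_j$ and match it against \eqref{Rf2-1}. Throughout I would use that $f_s$ is a bijection from $C_s$ onto $C_t$, which lets me rewrite $\sum_{y\in C_t}(\cdots)=\sum_{c\in C_s}(\cdots)\big|_{y=f_s(c)}$.

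First I would isolate $c_j$ by an orthogonality argument. Writing $f_s^{-1}(y)=\sum_j c_j y^j$ for $y\in C_t$, multiplying by $y^{p}$ and summing over $y\in C_t$ gives $\sum_{c\in C_s}c\,f_s(c)^{p}=\sum_j c_j\sum_{y\in C_t}y^{j+p}$. By Lemma~\ref{sum} applied to $C_t$ (together with the evident value $\sum_{y\in C_t}1=-2^{-1}$), the inner sum vanishes unless $j+p\equiv 0$ or $(q-1)/2\pmod{q-1}$. For $p=0$ this forces $c_0=-2\sum_{c\in C_s}c=0$, where $q>3$ guarantees $1\neq(q-1)/2$; for $1\le p\le(q-3)/2$ only the term with $j=(q-1)/2-p$ survives, yielding $c_{(q-1)/2-p}=(-1)^{t+1}2\sum_{c\in C_s}c\,f_s(c)^{p}$.

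Next I would feed in the hypothesis \eqref{h-q-1-i}. Setting $i=j+(q-1)/2$, so that $i$ runs over $(q+1)/2\le i\le q-2$ and $p=(q-1)/2-j=q-1-i$, the congruence \eqref{h-q-1-i} gives $f_s(c)^{q-1-i}=\sum_k b_{ik}c^k$ as an identity of values on $\F_q$. Hence $\sum_{c\in C_s}c\,f_s(c)^{q-1-i}=\sum_k b_{ik}\sum_{c\in C_s}c^{k+1}$, and a second application of Lemma~\ref{sum}, now to $C_s$, kills every term except $k+1=q-1$ and $k+1=(q-1)/2$; that is, only $k=q-2$ and $k=(q-3)/2$ contribute, with weights $-2^{-1}$ and $(-1)^{s+1}2^{-1}$. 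Substituting into the expression for $c_j$ and simplifying the powers of $-1$ then produces exactly $(-1)^{s+t}\big(b_{i,(q-3)/2}+(-1)^s b_{i,q-2}\big)$, the coefficient of $x^{i-(q-1)/2}$ in \eqref{Rf2-1}.

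I expect the difficulty here to be bookkeeping rather than conceptual. The delicate points are the reduction of exponents modulo $q-1$ together with the exceptional values $0$, $(q-1)/2$, and $q-1$ singled out by Lemma~\ref{sum}, and the careful tracking of the signs $(-1)^s$ and $(-1)^t$ through the two summations. One should also record why $q>3$ is needed: it is exactly the condition ensuring $1\neq(q-1)/2$, which makes the constant term $c_0$ vanish and is reflected in the absence of an $x^0$ term in \eqref{Rf2-1}.
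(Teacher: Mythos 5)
Your proposal is correct, and it reaches \eqref{Rf2-1} by a route that differs from the paper's in its first half. The paper starts from the Lagrange interpolation formula over $C_s$, obtaining a degree-$(q-1)$ representative with coefficients $c_i=-\sum_{a\in C_s}af_s(a)^{q-1-i}$, and then folds it down to degree $(q-3)/2$ via the identity $c_i=(-1)^tc_{i+(q-1)/2}$ (a consequence of $f_s(a)^{(q-1)/2}=(-1)^t$), working modulo $x^{(q-1)/2}+(-1)^{t+1}$ only at the end; the vanishing of $c_{(q-1)/2}$ there plays the role of your vanishing of $c_0$, and both rest on $\sum_{a\in C_s}a=0$, which is where $q>3$ enters in either version. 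You instead work from the outset in $\F_q[x]/(x^{(q-1)/2}+(-1)^{t+1})$, whose root set is exactly $C_t$, and extract the coefficients of the unique low-degree representative by pairing against the moments $\sum_{y\in C_t}y^{j+p}$; Lemma~\ref{sum} applied to $C_t$ makes this moment system trivially invertible (each equation isolates one $c_j$). The second half of both arguments is identical: substitute \eqref{h-q-1-i} and apply Lemma~\ref{sum} to $C_s$ to kill all but the $k=q-2$ and $k=(q-3)/2$ terms, and your sign bookkeeping $(-1)^{t+1}\cdot(-1)\cdot(b_{i,q-2}+(-1)^sb_{i,(q-3)/2})=(-1)^{s+t}(b_{i,(q-3)/2}+(-1)^sb_{i,q-2})$ checks out. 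What your approach buys is that the degree reduction is built in from the start (no separate folding step and no need to observe $c_{i}=(-1)^tc_{i+(q-1)/2}$); what the paper's buys is that Lagrange interpolation immediately exhibits $f_s^{-1}$ as a genuine polynomial on all of $\F_q$ with $f_s^{-1}(0)=0$, which is the form needed when it is fed into Lemma~\ref{inv-twop}.
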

\begin{proof}
Let $f_{s}^{-1}(x)=\sum_{i=0}^{q-1} c_i x^i \in \F_q[x]$.
Then by the Lagrange interpolation formula,
\[\begin{split}
f_{s}^{-1}(x)
& =\sum_{a \in C_{s}} a \big(1-(x-f_{s}(a))^{q-1}\big) \\
& = \sum_{a \in C_{s}} a \Big(-\sum_{1 \leq i \leq q-1}(-1)^i(-f_{s}(a))^{q-1-i}x^i\Big) \\
& = \sum_{1 \leq i \leq q-1}\Big(-\sum_{a \in C_{s}} a f_{s}(a)^{q-1-i}\Big)x^i.
\end{split}\]
Hence $c_0=0$ and
\begin{equation}\label{bi=ef2}
  c_{i} = -\sum_{a \in C_{s}} a f_{s}(a)^{q-1-i},
   \quad \text{where $1 \leq i \leq q-1$.}
\end{equation}
Then by~\eqref{h-q-1-i} and Lemma~\ref{sum},
\begin{equation}\label{bi-aik}
\begin{split}
 c_{i} & = -\sum_{a \in C_{s}} a\sum_{0 \leq k \leq q-1} b_{ik}a^k
        = -\sum_{0 \leq k \leq q-1} b_{ik}\sum_{a \in C_{s}} a^{k+1}  \\
       & = 2^{-1}(b_{i,q-2} +(-1)^{s} b_{i,(q-3)/2}).
\end{split}
\end{equation}

We next reduce the degree of $f_{s}^{-1}(x)$.
Since $f_{s}(x)$ induces a bijection from $C_{s}$ to $C_{t}$, we have
$f_{s}(a) \in C_{t}$  and $f_{s}(a)^{(q-1)/2}=(-1)^{t}$ for any $a \in C_{s}$, and so
\[
      f_{s}(a)^{q-1-i}
  = (-1)^{t} f_{s}(a)^{q-1-i}/f_{s}(a)^{\frac{q-1}{2}}
  = (-1)^{t} f_{s}(a)^{q-1-(i+\frac{q-1}{2})}.
\]
Substituting it into \eqref{bi=ef2} yields
\begin{equation}\label{bi=-bi+s}
    c_{i}  = (-1)^{t} c_{i+\frac{q-1}{2}},
    \quad 1 \leq i \leq (q-1)/2.
\end{equation}
In particular, if $q > 3$, then
\begin{equation}\label{bs=0}
   c_{(q-1)/2} = (-1)^{t} c_{q-1} =(-1)^{t+1}\sum_{a \in C_{s}} a =0,
\end{equation}
where the last identity follows from Lemma~\ref{sum}. Therefore,
\begin{equation}\label{fs-1}
\begin{split}
  f_{s}^{-1}(x)
  & \overset{\eqref{bs=0}}{=}
     \sum_{1 \leq i \leq (q-3)/2}
     \big(c_{i}x^{i} +c_{i+\frac{q-1}{2}}x^{i+\frac{q-1}{2}} \big) \\
  & \equiv \sum_{1 \leq i \leq (q-3)/2} \big( c_i +(-1)^{t} c_{i+\frac{q-1}{2}} \big)x^{i}
    \pmod{x^{\frac{q-1}{2}} +(-1)^{t+1}}\\
  & \overset{\eqref{bi=-bi+s}}{=}
    \sum_{1 \leq i \leq (q-3)/2} 2(-1)^{t} c_{i+\frac{q-1}{2}} x^{i}  \\
  & = \sum_{(q+1)/2 \leq i \leq q-2} 2(-1)^{t}c_{i} x^{i-\frac{q-1}{2}}.
\end{split}
\end{equation}
Substituting \eqref{bi-aik} into \eqref{fs-1} gives the desired result.
\end{proof}

\section{The inverse of the PP in Lemma~\ref{PP-3^n}}

In this section, we will employ the results in Section~2
to compute the inverse of the PP $f(x)$ in Lemma~\ref{PP-3^n}.
First, let $n>0$ be even, $C_0=\{e^2: e \in\F_{3^{n}}^{*}\}$
and $C_1=\F_{3^{n}}^{*} \setminus C_0$. Then $x^{(3^{n}-1)/2}=1$ if $x \in C_0$, and $x^{(3^{n}-1)/2}= -1$ if $x \in C_1$. Therefore,
\begin{equation*}
f(x)=(x -x^2 -x^3)x^{\frac{3^{n}-1}{2}} -x +x^2
\end{equation*}
can be written as
\[
 f(x) =
 \begin{cases}
   0              & \text{if $x = 0$,} \\
   f_{0}(x) := - x^3          & \text{if $x \in C_0$,} \\
   f_{1}(x) := x(x+1)^2  & \text{if $x \in C_1$.}
 \end{cases}
\]
Lemma~\ref{PP-3^n} stated that $f(x)$ is a PP of~$\F_{3^{n}}$.
It means that $f_{0}(x)$ (resp. $f_{1}(x)$) induces an injection on $C_{0}$ (resp. $C_{1}$), and $f_{0}(C_{0}) \cap f_{1}(C_{1}) = \emptyset$.
Since $n$ is even, $3^{n} \equiv 1 \pmod{4}$, and so $-1 \in C_{0}$.
Hence $f_0(x)$ (resp. $f_1(x)$) induces a permutation on $C_{0}$ (resp. $C_{1}$).

Since $(x^3)^{3^{n-1}} = x^{3^{n}} = x$ for any $x \in C_0$,
the inverse of $f_{0}(x)$ on $C_{0}$ is
\begin{equation}\label{f0-1}
f_{0}^{-1}(x)= -x^{3^{n-1}}.
\end{equation}
We next apply Theorem~\ref{f-1-C01} to determine the inverse of $f_{1}(x)$ on $C_{1}$.
Denote $q = 3^{n}$, $u=(q-3)/2$, $v=q-2$ and $w_{i}=q-1-i$, where $(q+1)/2 \leq i \leq q-2$. Then
\begin{equation}\label{f2^wi}
  f_{1}(x)^{w_{i}}
  = x^{w_{i}}(x+1)^{2w_{i}}
  = \sum_{0 \leq j \leq 2w_{i}} \binom{2w_{i}}{j} x^{w_{i}+j}.
\end{equation}
The degree of $f_{1}(x)^{w_{i}}$ is $3w_{i}$,
and $3 \leq 3w_{i} < q-1 +u$, as shown in Figure~\ref{Range-3wi}.
\begin{figure}[ht]
  \centering
\begin{tikzpicture}
\filldraw
 (-0.7,0) --
 (0,0)     circle (1pt) node[above] {$3$} --
 (1.2,0)   circle (1pt) node[above] {$u$} --
 (2.4,0)   circle (1pt) node[above] {$v$} --
 (3.8,0)   circle (1pt) node[above] {$q\!-\!1\!+\!u$} --
 (5.5,0)   circle (1pt) node[above] {$q\!-\!1\!+\!v$};
\draw[->] (5.5,0) -- (6.5,0);
\draw  (0,0) -- (0,-0.6);
\draw  (3.6,0) -- (3.6,-0.6);
\draw[<-] (0,-0.4) -- (1.55,-0.4);
\draw[->] (2.25,-0.4) -- (3.6,-0.4);
\draw (1.9, -0.4) node{$3w_{i}$};
\end{tikzpicture}
\caption{The range of $3w_{i}$}\label{Range-3wi}
\end{figure}
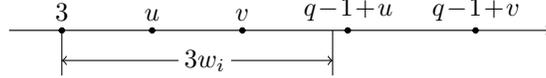
Hence the coefficient of $x^u$ in~\eqref{h-q-1-i} equals
the coefficients of $x^u$ in~\eqref{f2^wi}, i.e., $b_{iu}=\binom{2w_{i}}{u-w_{i}}$.
Similarly, $b_{iv}=\binom{2w_{i}}{v-w_{i}}$.
If $3w_{i} < u$, i.e., $i > (5q-3)/6$, then $b_{iu} = 0$.
If $3w_{i} < v$, i.e., $i \geq 2q/3$, then $b_{iv}=0$.
Hence we only need to consider the binomial coefficients
\[
\begin{split}
 b_{iu} & = \binom{2q-2-2i}{i-\frac{q+1}{2}}
    \quad \text{for $(q+1)/2 \leq i \leq (5q-3)/6$,}  \\
 b_{iv} & = \binom{2q-2-2i}{i-1} \quad
    \text{for $(q+1)/2 \leq i < 2q/3$.}
\end{split}
\]
\begin{lem}[{\cite[Lemma~9]{Zheng-3}}]\label{lem-bico-qm}
Let $q$ be a prime $p$ power, and let $m$, $k$ be integers with $0 \leq k < q$. Then
\[
\binom {q+m}{k} \equiv \binom {m}{k} \pmod{p},
\]
where $\binom {m}{k}=m(m-1)\cdots(m-k+1)/k!$.
\end{lem}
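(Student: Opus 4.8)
The plan is to prove the congruence by passing to generating functions over $\F_{p}$, so as to handle the whole binomial coefficient at once rather than manipulating the falling-factorial product directly. Recall that $\binom{N}{k}$ is the coefficient of $t^{k}$ in the formal expansion of $(1+t)^{N}$; this holds for every integer $N$, and the definition $\binom{m}{k}=m(m-1)\cdots(m-k+1)/k!$ is precisely the coefficient of $t^{k}$ in the binomial series $(1+t)^{m}$ when $m<0$. Thus both sides of the claimed identity are integers and can be read off from a single power series in $\Z[[t]]$, which is the vantage point I would adopt.

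First I would record the formal identity $(1+t)^{q+m}=(1+t)^{q}(1+t)^{m}$ in $\Z[[t]]$, which is just Vandermonde's convolution and is valid for all integers $m$. Reducing coefficients modulo $p$ gives a ring homomorphism $\Z[[t]]\to\F_{p}[[t]]$ under which $(1+t)^{q}=(1+t)^{p^{e}}$ becomes $1+t^{q}$ by iterated Frobenius (the freshman's dream). Hence, modulo $p$, one has $(1+t)^{q+m}\equiv(1+t^{q})(1+t)^{m}=(1+t)^{m}+t^{q}(1+t)^{m}$. Comparing the coefficient of $t^{k}$ on both sides and using the hypothesis $k<q$, which forces the term $t^{q}(1+t)^{m}$ to contribute nothing in degree $k$, yields $\binom{q+m}{k}\equiv\binom{m}{k}\pmod{p}$, as desired. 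An essentially equivalent route is to apply Vandermonde directly as $\binom{q+m}{k}=\sum_{j}\binom{q}{j}\binom{m}{k-j}$ and invoke the standard fact that $\binom{p^{e}}{j}\equiv0\pmod{p}$ for $1\le j\le p^{e}-1$, so that only the $j=0$ term survives, the value $j=q$ being excluded because $j\le k<q$.

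The step I expect to be the real obstacle is conceptual rather than computational: the naive factorwise attempt --- each factor satisfies $q+m-i\equiv m-i\pmod{p^{e}}$, so the two falling-factorial numerators agree modulo $p^{e}$ --- is not strong enough after dividing by $k!$, since $v_{p}(k!)$ can be as large as $e$ for $k<q$ (for instance $v_{3}(8!)=2$ with $q=9$), destroying the single factor of $p$ we need. The generating-function viewpoint sidesteps this loss of $p$-adic precision entirely by never dividing by $k!$ explicitly, and it simultaneously dispenses with any separate treatment of negative $m$, since the argument lives in $\F_{p}[[t]]$ from the start. The only points requiring care are the justification of $(1+t)^{q}\equiv1+t^{q}\pmod{p}$ for the prime power $q=p^{e}$ and the bookkeeping that $k<q$ kills the cross term.
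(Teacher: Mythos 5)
The paper does not prove this lemma at all: it is imported verbatim as \cite[Lemma~9]{Zheng-3}, so there is no in-paper argument to compare yours against. Your proof is correct and complete. The generating-function route is sound: $(1+t)^{q+m}=(1+t)^{q}(1+t)^{m}$ holds in $\Z[[t]]$ for every integer $m$ (with $(1+t)^{m}$ the inverse power of the unit $1+t$ when $m<0$, whose $t^{k}$-coefficient is exactly the stated $m(m-1)\cdots(m-k+1)/k!$), reduction mod $p$ is a ring homomorphism, $(1+t)^{p^{e}}=1+t^{p^{e}}$ in $\F_{p}[[t]]$ by iterated Frobenius, and the hypothesis $k<q$ kills the cross term $t^{q}(1+t)^{m}$ in degree $k$. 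The Vandermonde variant you sketch is equally valid, since $\binom{a+b}{k}=\sum_{j=0}^{k}\binom{a}{j}\binom{b}{k-j}$ is a polynomial identity in $a,b$ and hence holds for negative arguments, with only $j=0$ surviving because $\binom{p^{e}}{j}\equiv 0\pmod p$ for $1\le j\le p^{e}-1$ and $j\le k<q$ excludes $j=q$. Your diagnosis of why the naive factorwise congruence fails is also apt: the numerators agree only modulo $p^{e}$, while $v_{p}(k!)$ can itself reach $e$ for $k<p^{e}$ (e.g.\ $v_{3}(8!)=2$ for $q=9$), so that approach cannot by itself deliver a surviving factor of $p$; handling the binomial coefficient as a single power-series coefficient avoids this loss of $p$-adic precision and treats negative $m$ uniformly.
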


Employing Lemma~\ref{lem-bico-qm} and the fact
$\binom {-m}{k} = (-1)^k \binom {m+k-1}{k}$ for $m$, $k > 0$, we have
\begin{equation}\label{biv}
 b_{iv} \equiv \binom{-2-2i}{i-1} \equiv (-1)^{i-1}\binom {3i}{i-1} \pmod{3},
\end{equation}
where $(q+1)/2 \leq i < 2q/3$.
Similarly, if $(q+1)/2 \leq i \leq (5q-3)/6$, then
\begin{equation}\label{biu}
 b_{iu} \equiv \binom{-2-2i}{i-\frac{q+1}{2}}
 \equiv (-1)^{i-\frac{q+1}{2}}\binom{3i-\frac{q-1}{2}}{i-\frac{q+1}{2}} \pmod{3}.
\end{equation}

By Lemma~\ref{inv-twop}, Theorem~\ref{f-1-C01}, \eqref{f0-1}, \eqref{biv} and \eqref{biu},
the inverse of $f(x)$ in Lemma~\ref{PP-3^n}~on~$\F_{3^n}$~is
\begin{equation}\label{f-1-abs}
 f^{-1}(x)
= x^{3^{n-1}}\big(x^{\frac{3^{n}-1}{2}} +1\big)
+ f_{1}^{-1}(x)\big(x^{\frac{3^{n}-1}{2}} -1\big),
\end{equation}
where
\begin{equation}\label{f1-1-bi-co}
\begin{split}
f_{1}^{-1}(x) & =
   \sum_{(3^{n}+1)/2 \leq i \leq (5\cdot3^{n}-3)/6} (-1)^{i-\frac{3^{n}+1}{2}}
   \binom{3i-\frac{3^{n}-1}{2}}{i-\frac{3^{n}+1}{2}} x^{i-\frac{3^{n}-1}{2}} \\
  & \quad + \sum_{(3^{n}+1)/2 \leq i \leq 2\cdot3^{n}/3}
  (-1)^{i} \binom{3i}{i-1} x^{i-\frac{3^{n}-1}{2}}
\end{split} \end{equation}
in the sense of reduction modulo $x^{\frac{3^{n}-1}{2}} +1$.

\section{Explicit values of binomial coefficients}

This section will give the explicit values
of binomial coefficients in~\eqref{f1-1-bi-co}.

\begin{thm}
  Let $1 \leq i < 3^{n}$ with $n \geq 1$.
  Then {\large$\binom{3i}{i-1}$} $\not\equiv 0 \pmod{3}$ if and only if
  $i=(3^{k}-1)/2$, where $k=1$, $2$, $\ldots$, $n$.
\end{thm}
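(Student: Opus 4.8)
The plan is to analyze the base-$3$ digits of the numerator $3i$ and denominator $i-1$ and apply Lucas' theorem, which states that $\binom{3i}{i-1} \not\equiv 0 \pmod 3$ if and only if every base-$3$ digit of $i-1$ is less than or equal to the corresponding digit of $3i$. First I would write $i$ in base three as $i = \sum_{t=0}^{n-1} a_t 3^t$ with each $a_t \in \{0,1,2\}$, so that multiplication by $3$ shifts the digits, giving $3i = \sum_{t=0}^{n-1} a_t 3^{t+1}$ (with a zero in the units place). The subtraction $i-1$ is the source of the combinatorial interaction, since borrowing can propagate through any block of low-order zero digits of $i$; handling this borrowing carefully is what turns the digit comparison into a clean characterization.

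The key observation is that the units digit of $3i$ is $0$, so Lucas forces the units digit of $i-1$ to be $0$ as well. I would track what subtracting $1$ does: if $a_0 \geq 1$ then $i-1$ has units digit $a_0 - 1$ and the higher digits unchanged, whereas if $a_0 = 0$ a borrow cascades. Since the units digit of $3i$ equals $0$, the Lucas condition immediately rules out any $i$ whose reduced form of $i-1$ has a nonzero units digit, and then forces alignment of the shifted digits $a_t$ of $3i$ against the (post-borrow) digits of $i-1$ in positions $1,2,\dots$. Carrying out this digit-by-digit comparison, I expect to find that $\binom{3i}{i-1}\not\equiv 0\pmod 3$ forces $i$ to have the very rigid shape in which all its base-$3$ digits equal $1$ in a consecutive low block and $0$ above, i.e. $i = 1 + 3 + \cdots + 3^{k-1} = (3^k-1)/2$ for some $1 \le k \le n$.

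For the converse, I would simply verify directly that $i = (3^k-1)/2$ works: here $i-1 = (3^k-3)/2 = 3\cdot(3^{k-1}-1)/2$ and $3i = (3^{k+1}-3)/2 = 3\cdot(3^k-1)/2$, so both numerator and denominator are multiples of $3$ with transparent base-$3$ expansions, and the Lucas digit inequalities can be checked to hold in every position. The constraint $1 \le i < 3^n$ translates into $1 \le k \le n$, matching the claimed range. The main obstacle will be bookkeeping the borrow propagation in $i-1$ cleanly enough that the forward direction becomes a genuine characterization rather than a case explosion; I expect the cleanest route is to first use the zero units digit of $3i$ to pin down the units digit of $i$, then induct on the number of base-$3$ digits, peeling off one digit at a time and showing at each stage that any digit exceeding $1$ (or any high zero digit sitting below a nonzero digit) violates a Lucas inequality.
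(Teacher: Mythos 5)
Your proposal is correct and follows essentially the same route as the paper: write $i$ in base $3$, apply Lucas' theorem to the digits of $3i$ (a left shift with units digit $0$) versus those of $i-1$, use the zero units digit of $3i$ to pin down $i_0=1$ and dispose of the borrow case $i_0=0$ immediately, and then read off that the remaining digit inequalities force the digits to be a weakly decreasing string of $1$'s, i.e.\ $i=(3^k-1)/2$. The borrow bookkeeping you are worried about is harmless: the only borrow occurs when $i_0=0$, and it is killed at the very first digit by $\binom{0}{2}\equiv 0\pmod 3$, exactly as in the paper.
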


\begin{proof}
If $1 \leq i < 3^{n}$, then we can write
$
i = i_0 +i_1 3 +i_2 3^2+ \cdots +i_{n-1} 3^{n-1},
$
where $i_t=0$, $1$, $2$ for $0 \leq t \leq n-1$. Then
\[
 \begin{split}
  3i   & = ~~ 0 ~~~    +i_0 3 +i_1 3^2 + \cdots +i_{n-2} 3^{n-1}+i_{n-1} 3^{n}, \\
  i-1  & =  i_0\!-\!1  +i_1 3 +i_2 3^2 + \cdots +i_{n-1} 3^{n-1}.
  \end{split}
\]
By Lucas' theorem, if $i_0 = 0$, then
$\binom{3i}{i-1} \equiv \binom{0}{2} \equiv 0 \pmod{3}$,
and if $i_0 = 1, 2$, then
\[
\binom{3i}{i-1} \equiv \binom{0}{i_0\!-\!1}\binom{i_0}{i_1}
\binom{i_1}{i_2}\cdots\binom{i_{n-2}}{i_{n-1}} \binom{i_{n-1}}{0}  \pmod{3}.
\]
Hence $\binom{3i}{i-1} \not\equiv 0 \pmod{3}$
if and only if one of the following conditions holds:
\begin{enumerate}
  \item $i_0=1$ and $1 \geq i_1 \geq i_2 \geq \cdots
        \geq i_{n-2} \geq i_{n-1} \geq 0$;
  \item $i=1$, $1+3$, $1+3+3^2$, $\ldots$, $1+3+3^2+\cdots +3^{n-1}$;
  \item $i=(3^{k}-1)/2$, where $k=1$, $2$, $\ldots$, $n$. \qedhere
\end{enumerate}
\end{proof}
\begin{cor}\label{3i_i-1=0}
  Let $(3^{n}-1)/2 < i < 3^{n}$ with $n \geq 1$.
  Then {\large$\binom{3i}{i-1}$} $\equiv 0 \pmod{3}$.
\end{cor}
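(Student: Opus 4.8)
The plan is to derive this corollary directly from the preceding theorem, which characterizes exactly when $\binom{3i}{i-1} \not\equiv 0 \pmod 3$ for $1 \leq i < 3^n$. Since that theorem already produces the complete list of indices $i$ in this range for which the binomial coefficient is nonzero modulo $3$, namely $i = (3^k-1)/2$ with $k = 1, 2, \ldots, n$, all that remains is to verify that none of these exceptional indices falls in the interval $(3^n-1)/2 < i < 3^n$. In other words, the corollary is an immediate consequence once we compare its hypothesized range against the theorem's exceptional set.

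First I would observe that the map $k \mapsto (3^k-1)/2$ is strictly increasing in $k$, so over $k \in \{1, \ldots, n\}$ its largest value is attained at $k = n$ and equals $(3^n-1)/2$. Consequently, every index $i$ with $1 \leq i < 3^n$ for which $\binom{3i}{i-1} \not\equiv 0 \pmod 3$ must satisfy $i \leq (3^n-1)/2$.

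Then I would invoke the standing hypothesis $(3^n-1)/2 < i$, which places $i$ strictly above this maximum, while the upper bound $i < 3^n$ keeps $i$ inside the range where the theorem is applicable. Thus $i$ is an index to which the theorem applies yet is distinct from every value $(3^k-1)/2$ for $k = 1, \ldots, n$; the ``only if'' direction of the theorem then forces $\binom{3i}{i-1} \equiv 0 \pmod 3$, as claimed.

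I expect essentially no obstacle here, since the entire arithmetic content is carried by the theorem and the corollary reduces to an elementary interval comparison. The only point meriting a moment of care is confirming that $(3^n-1)/2$ is genuinely the top of the exceptional list, so that the strict inequality $i > (3^n-1)/2$ does exclude even the boundary case $k = n$; everything else follows by inspection.
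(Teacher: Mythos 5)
Your argument is correct and matches the paper's (implicit) reasoning exactly: the corollary is stated as an immediate consequence of the preceding theorem, whose exceptional set $\{(3^k-1)/2 : k=1,\ldots,n\}$ has maximum $(3^n-1)/2$, so no exceptional index lies in the stated range. Nothing further is needed.
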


It is easy to obtain the following $3$-adic expansion:
\[
(5\cdot3^{n}-3)/6 -(3^{n}+1)/2 = 2+2\cdot3+2\cdot3^2+\cdots+2\cdot3^{n-2}.
\]
Thus, when $(3^{n}+1)/2 \leq i \leq (5\cdot3^{n}-3)/6$, we can write
\begin{equation}\label{i-(q+1)/2}
i = (3^{n}+1)/2 +i_0 +i_1\cdot3 +i_2\cdot3^2+\cdots +i_{n-2}\cdot3^{n-2},
\end{equation}
where $i_t=0$, $1$, $2$ for $0 \leq t \leq n-2$.
Then by an argument similar to the one used in \cite[Theorems~11 and 12]{Zheng-3},
we obtain the following two results.

\begin{thm}\label{c-ne0}
Let $(3^{n}+1)/2 \leq i \leq (5\cdot3^{n}-3)/6$ with $n \geq 1$.
Then the following statements are equivalent:
\begin{enumerate}[$(i)$]
  \item $\displaystyle\binom {3i-\frac{3^{n} -1}{2}}{i-\frac{3^{n} +1}{2}}$ $\not\equiv 0 \pmod{3}$;
  \item $2 \geq i_0 \geq i_1 \geq \cdots \geq i_{n-2} \geq 0$,
        where $i_0, i_1, \ldots, i_{n-2}$ are defined by~\eqref{i-(q+1)/2};
  \item $i = \frac{3^{n} +1}{2} + \frac{3^j -1}{2} +\frac{3^k-1}{2}$,
        where $0 \leq j \leq k\leq n-1$.
\end{enumerate}
\end{thm}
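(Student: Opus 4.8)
The plan is to follow the strategy of the preceding theorem and of \cite[Theorems 11 and 12]{Zheng-3}: translate the non-vanishing of the binomial coefficient into a ternary-digit condition via Lucas' theorem, and then convert that digit condition into the closed form in $(iii)$. I would first prove $(i) \Leftrightarrow (ii)$. Writing $m := i - \frac{3^n+1}{2} = i_0 + i_1\cdot 3 + \cdots + i_{n-2}\cdot 3^{n-2}$ from \eqref{i-(q+1)/2}, the key preliminary computation is to put the upper index into base $3$. A direct calculation gives $3i - \frac{3^n-1}{2} = 3^n + 2 + 3m$, so that its $3$-adic digits are $2$ in position $0$, then $i_0, i_1, \ldots, i_{n-2}$ in positions $1, \ldots, n-1$, and $1$ in position $n$; since every digit is at most $2$, no carries occur. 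The lower index $m$ has digits $i_0, \ldots, i_{n-2}$ in positions $0, \ldots, n-2$ and $0$ elsewhere.

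Applying Lucas' theorem digit by digit then collapses the binomial coefficient modulo $3$ to the product
\[
\binom{2}{i_0}\binom{i_0}{i_1}\binom{i_1}{i_2}\cdots\binom{i_{n-3}}{i_{n-2}},
\]
because the positions $n-1$ and $n$ contribute $\binom{i_{n-2}}{0}$ and $\binom{1}{0}$, both equal to $1$. Each single-digit factor $\binom{a}{b}$ with $a,b\in\{0,1,2\}$ is nonzero modulo $3$ exactly when $b \le a$; since $\binom{2}{i_0}$ never vanishes, the whole product is nonzero iff $i_0 \ge i_1 \ge \cdots \ge i_{n-2}$, which together with the automatic bounds $2 \ge i_0$ and $i_{n-2} \ge 0$ is precisely $(ii)$. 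This yields $(i) \Leftrightarrow (ii)$.

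For $(ii) \Leftrightarrow (iii)$ I would observe that a nonincreasing sequence $2 \ge i_0 \ge \cdots \ge i_{n-2} \ge 0$ of ternary digits is exactly a block pattern consisting of some $2$'s, then some $1$'s, then some $0$'s. If there are $j$ twos and $k-j$ ones, so that the $2$'s fill positions $0,\ldots,j-1$ and the $1$'s fill positions $j,\ldots,k-1$ with $0 \le j \le k \le n-1$, then summing the two geometric series gives $m = \frac{3^j-1}{2} + \frac{3^k-1}{2}$, hence $i = \frac{3^n+1}{2} + \frac{3^j-1}{2} + \frac{3^k-1}{2}$. Conversely, for such $j,k$ the summands $\frac{3^j-1}{2}$ and $\frac{3^k-1}{2}$ are the all-ones ternary strings of lengths $j$ and $k$, and adding them (no carries, since the digit sums stay $\le 2$) reproduces exactly this block pattern, so the digits are nonincreasing. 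This closes the equivalence.

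The routine part is the geometric-series bookkeeping in the last paragraph; the step that needs the most care is the base-$3$ reduction of the upper index. I expect the main obstacle to be verifying that $3i - \frac{3^n-1}{2} = 3^n + 2 + 3m$ and that its boundary digits at positions $0$, $n-1$ and $n$ are read off correctly with no carries, since a miscount there would change which single-digit binomials appear in the Lucas product and hence alter the final condition. I would also briefly check the degenerate small cases $n=1,2$, where the digit string $i_0,\ldots,i_{n-2}$ is empty or a single digit; there $(ii)$ holds vacuously and the claimed equivalence is immediate.
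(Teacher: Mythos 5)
Your proof is correct and follows exactly the route the paper intends: the paper omits the details, saying only that the result follows ``by an argument similar to the one used in \cite[Theorems~11 and 12]{Zheng-3}'' and to its own preceding theorem on $\binom{3i}{i-1}$, and that argument is precisely your Lucas'-theorem computation on the ternary digits of $3i-\frac{3^n-1}{2}=3^n+2+3m$ and $m=i-\frac{3^n+1}{2}$, followed by the identification of nonincreasing digit strings with sums $\frac{3^j-1}{2}+\frac{3^k-1}{2}$. All the boundary checks (no carries, the trivial factors $\binom{i_{n-2}}{0}$ and $\binom{1}{0}$ at positions $n-1$ and $n$, and the degenerate cases $n=1,2$) come out as you state.
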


\begin{thm}\label{Aeq1}
Let $i = \frac{3^{n} +1}{2} + \frac{3^j -1}{2} +\frac{3^k-1}{2}$,
where $0 \leq j \leq k\leq n-1$. Then
\[
\binom {3i-\frac{3^{n}-1}{2}}{i-\frac{3^{n}+1}{2}} \equiv
\begin{cases}
   1 \pmod{3} & \text{if $j = k$,}\\
   2 \pmod{3} & \text{if $j < k$.}\\
\end{cases}
\]
\end{thm}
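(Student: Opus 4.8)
The plan is to reduce the statement to a direct application of Lucas' theorem once the base-$3$ digit patterns of the numerator and denominator are pinned down. Write the denominator as $m := i-\frac{3^n+1}{2} = \frac{3^j-1}{2}+\frac{3^k-1}{2}$, and simplify the numerator: since $3\cdot\frac{3^n+1}{2}-\frac{3^n-1}{2}=3^n+2$, we have $N := 3i-\frac{3^n-1}{2} = 3^n+2+3\cdot\frac{3^j-1}{2}+3\cdot\frac{3^k-1}{2}$. The crucial observation is that $\frac{3^\ell-1}{2}=1+3+\cdots+3^{\ell-1}$ is a base-$3$ repunit, carrying digit $1$ in positions $0,\ldots,\ell-1$; hence every piece entering $m$ and $N$ has explicitly known digits, and multiplying a repunit by $3$ merely shifts it to digit $1$ in positions $1,\ldots,\ell$.

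First I would record the digits of $m$. Adding the repunits $\frac{3^j-1}{2}$ and $\frac{3^k-1}{2}$ produces no carries, since each column sums to at most $2$; thus $m$ has digit $2$ in positions $0,\ldots,j-1$, digit $1$ in positions $j,\ldots,k-1$ (an empty range when $j=k$), and $0$ elsewhere. Next I would compute the digits of $N$: the constant $2$ gives digit $2$ at position $0$, the two shifted repunits give digit $1$ in positions $1,\ldots,j$ and $1,\ldots,k$, and $3^n$ gives digit $1$ at position $n$. Again no carries occur, because the hypothesis $0\le j\le k\le n-1$ keeps $k<n$, so the top digit at position $n$ is disjoint from the repunit contributions. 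The net effect is that $N$ has digit $2$ in positions $0,\ldots,j$, digit $1$ in positions $j+1,\ldots,k$ and at position $n$, and $0$ elsewhere.

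Finally I apply Lucas' theorem in the form $\binom{N}{m}\equiv\prod_t\binom{N_t}{m_t}\pmod 3$, where $N_t,m_t$ denote the respective base-$3$ digits, splitting into the two cases. When $j=k$, every local factor is one of $\binom{2}{2}$, $\binom{2}{0}$, $\binom{1}{0}$, or $\binom{0}{0}$, all equal to $1$, so the product is $1$. When $j<k$, the single column at position $j$ contributes $\binom{N_j}{m_j}=\binom{2}{1}=2$, while every other column again yields a factor $1$ (the columns $j+1,\ldots,k-1$ give $\binom{1}{1}$, the columns at $k$ and $n$ give $\binom{1}{0}$, and the lower columns give $\binom{2}{2}$), so the product is $2$. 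This is precisely the asserted congruence.

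The main obstacle is the digit bookkeeping rather than any deep idea: one must verify that no carries arise when the constant, the two (shifted) repunits, and the term $3^n$ are summed. The condition $0\le j\le k\le n-1$ is exactly what guarantees this, by bounding every column sum by $2$ and separating the support of the summands from position $n$. Once the digit patterns of $N$ and $m$ are established, the two cases follow immediately from Lucas' theorem, in parallel with the argument of \cite[Theorems~11 and 12]{Zheng-3}.
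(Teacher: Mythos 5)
Your proof is correct, and it follows essentially the same route the paper intends: the paper omits an explicit proof, deferring to ``an argument similar to the one used in \cite[Theorems~11 and 12]{Zheng-3}'', which is precisely the Lucas-theorem digit computation you carry out. Your base-$3$ digit bookkeeping for $N=3i-\frac{3^n-1}{2}$ and $m=i-\frac{3^n+1}{2}$ checks out (no carries, and the column at position $j$ contributing $\binom{2}{1}=2$ exactly when $j<k$), so this fills in the omitted details faithfully.
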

According to Theorem~\ref{Aeq1}, we obtain the following result.
\begin{thm}\label{Beq1}
Let $i = \frac{3^{n} +1}{2} + \frac{3^j -1}{2} +\frac{3^k-1}{2}$,
where $0 \leq j \leq k\leq n-1$. Then
\[
(-1)^{i-\frac{3^{n}+1}{2}}
\binom {3i-\frac{3^{n}-1}{2}}{i-\frac{3^{n}+1}{2}} \equiv
\begin{cases}
  \quad 1 ~\qquad \pmod{3} & \text{if $j = k$,}\\
   (-1)^{j+k+1} \pmod{3} & \text{if $j < k$.}\\
\end{cases}
\]
\end{thm}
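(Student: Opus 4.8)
The plan is to treat this as an immediate consequence of Theorem~\ref{Aeq1}: that result already determines the binomial coefficient $\binom{3i-\frac{3^{n}-1}{2}}{i-\frac{3^{n}+1}{2}}$ modulo $3$, so the only remaining task is to evaluate the sign $(-1)^{i-\frac{3^{n}+1}{2}}$ and then multiply the two together. There is essentially no new combinatorial content, only a short parity computation.

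First I would simplify the exponent. Substituting $i = \frac{3^{n}+1}{2} + \frac{3^j-1}{2} + \frac{3^k-1}{2}$ gives
\[
i - \frac{3^{n}+1}{2} = \frac{3^j + 3^k - 2}{2},
\]
which is an integer because $3^j$ and $3^k$ are both odd. To pin down its parity I would work modulo $4$, using $3 \equiv -1 \pmod{4}$ so that $3^j \equiv (-1)^j \pmod{4}$ and likewise for $k$. Then
\[
2\Bigl(i - \tfrac{3^{n}+1}{2}\Bigr) = 3^j + 3^k - 2 \equiv (-1)^j + (-1)^k - 2 \pmod{4},
\]
which is $\equiv 0 \pmod{4}$ when $j$ and $k$ have the same parity and $\equiv 2 \pmod{4}$ otherwise. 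Hence $i - \frac{3^{n}+1}{2}$ is even exactly when $j \equiv k \pmod{2}$, so that
\[
(-1)^{\,i-\frac{3^{n}+1}{2}} = (-1)^{j+k}
\]
in every case (note this is consistent with $j=k$, where the exponent $\frac{3^j+3^k-2}{2} = 3^j - 1$ is even).

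It then remains only to combine this sign with Theorem~\ref{Aeq1}. When $j = k$, we have $(-1)^{j+k} = 1$ and the binomial coefficient is $\equiv 1 \pmod{3}$, so the product is $\equiv 1 \pmod{3}$. When $j < k$, the binomial coefficient is $\equiv 2 \equiv -1 \pmod{3}$, so the product is $\equiv (-1)^{j+k}\cdot(-1) = (-1)^{j+k+1} \pmod{3}$, matching the two cases in the statement. The only obstacle worth naming is the parity bookkeeping in the middle step, and it is entirely elementary; the substance of the result lives in Theorem~\ref{Aeq1}, which we are permitted to invoke.
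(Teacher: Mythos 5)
Your proposal is correct and follows essentially the same route as the paper: both reduce the theorem to evaluating the sign $(-1)^{i-\frac{3^{n}+1}{2}}=(-1)^{j+k}$ and then invoking Theorem~\ref{Aeq1}. The only difference is cosmetic --- you determine the parity of $\frac{3^j+3^k-2}{2}$ by working modulo $4$, while the paper expands $3^{m}=(1+2)^{m}$ to get $(-1)^{\frac{3^m-1}{2}}=(-1)^{m}$; both are equally elementary and valid.
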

\begin{proof}
Since
$3^{m}=(1+2)^{m}=1+\binom {m}{1}2 +\binom {m}{2}2^2+\cdots+\binom {m}{m}2^m$,
we have $(-1)^{\frac{3^m-1}{2}}=(-1)^{m}$, and so
$(-1)^{i-\frac{3^{n}+1}{2}} = (-1)^{\frac{3^j-1}{2} +\frac{3^k-1}{2}} = (-1)^{j+k}$.
Substituting it into Theorem~\ref{Aeq1} gives the desired result.
\end{proof}

By Corollary~\ref{3i_i-1=0}, Theorems~\ref{c-ne0} and~\ref{Beq1},
we can write~\eqref{f1-1-bi-co} as the following form:
\begin{equation}\label{f1-1-ajk}
\begin{split}
f_{1}^{-1}(x)
& = \sum_{0\leq k \leq n-1} x^{3^k}
    +\sum_{0 \leq j < k \leq  n-1} 2(-1)^{j+k} x^{\frac{3^j+3^k}{2}} \\
& = \sum_{0\leq j = k \leq n-1} (-1)^{j+k} x^{\frac{3^j+3^k}{2}}
    +\sum_{0 \leq j \neq k \leq  n-1} (-1)^{j+k} x^{\frac{3^j+3^k}{2}} \\
& = \sum_{0 \leq j, k \leq  n-1}
    (-1)^{j+k} x^{\frac{3^j+3^k}{2}}.
\end{split}
\end{equation}
Substituting~\eqref{f1-1-ajk} into~\eqref{f-1-abs}
completes the proof of Theorem~\ref{inv-hou}.
\section{Slight generalization}

In this section, we also let $C_0=\{e^2: e \in\F_{3^{n}}^{*}\}$
and $C_1=\F_{3^{n}}^{*} \setminus C_0$.
Let $\eta$ be the quadratic character.
By an argument similar to that used in Theorem~\ref{inv-hou},
we deduce the inverses of some generalized PPs studied in~\cite{Wang-cyc}.

\begin{lem}[{\cite[Theorem~4.7]{Wang-cyc}}]\label{W4.7}
Let $\alpha$, $\beta$, $\gamma$, $\theta \in \F_{3^n}^{*}$ with $n \geq 1$, and let
\begin{equation}\label{f4.7}
 f(x) =
 \begin{cases}
   0              & \text{if $x = 0$,} \\
   \alpha(x^3 +\gamma x^2 +\gamma^2 x)  & \text{if $x \in C_0$,} \\
   \beta (x^3 +\theta x^2 +\theta^2 x)  & \text{if $x \in C_1$.}
 \end{cases}
\end{equation}
Then $f(x)$ is a PP of $\F_{3^n}$ if and only if
$\eta(\gamma)=-1$, $\eta(\theta)=1$ and $\eta(\alpha) =\eta(\beta)$.
\end{lem}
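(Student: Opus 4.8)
The plan is to read off the two pieces, rewrite them in factored form, and apply the piecewise permutation criterion of Lemma~\ref{inv-twop}. Since $-2 \equiv 1 \pmod 3$, we have the characteristic-three factorization $x^3 + \gamma x^2 + \gamma^2 x = x(x-\gamma)^2$, so the piece functions are $f_{0}(x) = \alpha\, x(x-\gamma)^2$ on $C_0$ and $f_{1}(x) = \beta\, x(x-\theta)^2$ on $C_1$. By Lemma~\ref{inv-twop}, $f$ is a PP of $\F_{3^n}$ if and only if $f_0$ is injective on $C_0$ with $0 \notin f_0(C_0)$, $f_1$ is injective on $C_1$ with $0 \notin f_1(C_1)$, and $f_0(C_0) \cap f_1(C_1) = \emptyset$. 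I would show that each of these three requirements is equivalent to exactly one of the three character conditions in the statement, which gives the ``if and only if'' in one pass.

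First I would dispose of the non-vanishing conditions. The only nonzero root of $x(x-\gamma)^2$ is $x = \gamma$, so $0 \in f_0(C_0)$ precisely when $\gamma \in C_0$; as $\gamma \in \F_{3^n}^{*}$ this yields $0 \notin f_0(C_0) \iff \eta(\gamma) = -1$, and symmetrically $0 \notin f_1(C_1) \iff \eta(\theta) = 1$. Under these constraints $x \neq \gamma$ for $x \in C_0$ and $x \neq \theta$ for $x \in C_1$, so $(x-\gamma)^2$ and $(x-\theta)^2$ are nonzero squares; computing quadratic characters then gives $\eta(f_0(x)) = \eta(\alpha)$ for every $x \in C_0$ and $\eta(f_1(x)) = -\eta(\beta)$ for every $x \in C_1$. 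Hence each image lies entirely inside a single one of $C_0, C_1$.

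The crux is injectivity. Suppose $f_0(x_1) = f_0(x_2)$ with distinct $x_1, x_2 \in C_0$; dividing by $\alpha$ and factoring out $x_1 - x_2$, and using $x_1^3 - x_2^3 = (x_1-x_2)^3$ in characteristic three, reduces the equation to $(x_1-x_2)^2 + \gamma(x_1+x_2) + \gamma^2 = 0$. Viewed as a quadratic in $x_1$, its discriminant collapses (again via $4 \equiv 1$ and $-2 \equiv 1 \pmod 3$) to $\gamma x_2$; since $\gamma \in C_1$ and $x_2 \in C_0$, this is a nonsquare, so no $x_1 \in \F_{3^n}$ exists and $f_0$ is injective on $C_0$. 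The identical computation for $f_1$, with $\theta \in C_0$ and $x_2 \in C_1$, produces discriminant $\theta x_2 \in C_1$, again a nonsquare, so $f_1$ is injective on $C_1$. With injectivity established, each of $f_0(C_0)$ and $f_1(C_1)$ fills an entire set $C_0$ or $C_1$, determined by the character values of the previous paragraph, so the two images are disjoint if and only if they are different sets, which happens exactly when $\eta(\alpha) = \eta(\beta)$. Assembling the three equivalences gives the lemma. I expect the one genuinely delicate point to be the discriminant collapsing to $\gamma x_2$ together with the observation that a nonsquare times a square is a nonsquare; the rest is bookkeeping with Lemma~\ref{inv-twop} and the quadratic character.
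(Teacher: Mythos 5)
Your argument is correct, but note that the paper itself does not prove this statement at all: it is imported verbatim as \cite[Theorem~4.7]{Wang-cyc}, so there is no in-paper proof to match against. What you have produced is a self-contained verification built on the paper's own Lemma~\ref{inv-twop}, and every step checks out. The factorization $x^3+\gamma x^2+\gamma^2 x = x(x-\gamma)^2$ in characteristic three is the same one the paper uses later in the proof of Theorem~\ref{inv-4.7}; the equivalences $0\notin f_0(C_0)\iff\eta(\gamma)=-1$ and $0\notin f_1(C_1)\iff\eta(\theta)=1$ are immediate from the double root at $\gamma$ (resp.\ $\theta$); and your discriminant computation is right: writing $(x_1-x_2)^2+\gamma(x_1+x_2)+\gamma^2=0$ as $x_1^2+(\gamma+x_2)x_1+(x_2^2+\gamma x_2+\gamma^2)=0$ gives discriminant $(\gamma+x_2)^2-(x_2^2+\gamma x_2+\gamma^2)=\gamma x_2$, which is a nonsquare under the respective hypotheses, so injectivity of each piece comes for free once the root conditions hold. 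The counting step (an injective map from $C_s$ into a coset of the squares must be onto that coset, so disjointness of images is exactly $\eta(\alpha)=\eta(\beta)$) closes the equivalence cleanly. The one thing worth flagging is a slight overstatement in your opening plan: the three requirements of Lemma~\ref{inv-twop} are not each individually equivalent to one character condition in isolation --- the disjointness condition is only equivalent to $\eta(\alpha)=\eta(\beta)$ \emph{after} the other two are in force --- but your actual execution handles the logic in the right order, so this is a matter of phrasing, not a gap.
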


Lemma~\ref{PP-3^n} is a special case of Lemma~\ref{W4.7}
for $n$ is even, $\gamma=0$ and $\alpha = -\beta = \theta = -1$.
The following result gives the inverse of $f(x)$ in Lemma~\ref{W4.7}.

\begin{thm}\label{inv-4.7}
If $f(x)$ in Lemma~\ref{W4.7} is a PP of $\F_{3^n}$
and $\eta(\alpha) =(-1)^{m}$ with $m \in \{0,1\}$, then
\[
  f^{-1}(x)
  = - u(x)\big(1+(-1)^{m}   x^{(3^n-1)/2}\big)
    - v(x)\big(1+(-1)^{m+1} x^{(3^n-1)/2}\big),
\]
where
\begin{equation}\label{u}
 u(x) =  \sum_{0 \leq j, k \leq n-1}
       \gamma\big(\alpha^{-1}\gamma^{-3} x \big)^{\frac{3^j+3^k}{2}},
\end{equation}
\begin{equation}\label{v}
 v(x)= \sum_{0 \leq j, k \leq n-1}
      \theta\big( \beta^{-1} \theta^{-3} x \big)^{\frac{3^j+3^k}{2}}.
\end{equation}
\end{thm}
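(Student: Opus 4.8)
The plan is to reduce Theorem~\ref{inv-4.7} to the already-established Theorem~\ref{inv-hou} by an affine change of variable, rather than rerunning the Lucas-theorem computations of Sections 3--4 from scratch. The starting point is the characteristic-three identity $x^2 + cx + c^2 = (x-c)^2$, which lets me rewrite the two pieces of~\eqref{f4.7} as $f_0(x) = \alpha\, x(x-\gamma)^2$ on $C_0$ and $f_1(x) = \beta\, x(x-\theta)^2$ on $C_1$. A one-line substitution then shows $f_1(x) = -\beta\theta^3\, g(-\theta^{-1}x)$ and $f_0(x) = -\alpha\gamma^3\, g(-\gamma^{-1}x)$, where $g(y) = y(y+1)^2$ is exactly the piece whose restricted inverse $g^{-1}(y) = \sum_{0\le j,k\le n-1}(-1)^{j+k} y^{(3^j+3^k)/2}$ was computed en route to Theorem~\ref{inv-hou} (equation~\eqref{f1-1-ajk}).

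First I would pin down the image classes. Since $\eta(\gamma) = -1$ and $\eta(\theta) = 1$, for $x \in C_0$ one has $x \ne \gamma$ so $(x-\gamma)^2 \in C_0$, and for $x \in C_1$ one has $x \ne \theta$ so $(x-\theta)^2 \in C_0$; combined with $\eta(\alpha) = \eta(\beta) = (-1)^m$ this gives $\eta(f_0(x)) = (-1)^m$ on $C_0$ and $\eta(f_1(x)) = (-1)^{m+1}$ on $C_1$. Hence $f_0$ maps $C_0$ into $C_m$ and $f_1$ maps $C_1$ into $C_{1-m}$, which selects case (i) of Lemma~\ref{inv-twop} when $m=0$ and case (ii) when $m=1$; both are captured uniformly by
\[
 f^{-1}(x) = \tfrac12 f_0^{-1}(x)\big(1 + (-1)^m x^{(3^n-1)/2}\big) + \tfrac12 f_1^{-1}(x)\big(1 + (-1)^{m+1} x^{(3^n-1)/2}\big).
\]
Because $2^{-1} = -1$ in characteristic three, it then remains only to identify $f_0^{-1} = u$ and $f_1^{-1} = v$ on the relevant classes.

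Next I would invert the substitution: from $f_1(x) = -\beta\theta^3 g(-\theta^{-1}x)$ one gets $f_1^{-1}(x) = -\theta\, g^{-1}(-\beta^{-1}\theta^{-3}x)$, and likewise $f_0^{-1}(x) = -\gamma\, g^{-1}(-\alpha^{-1}\gamma^{-3}x)$. Plugging in the closed form for $g^{-1}$ and using the parity fact $(-1)^{(3^j+3^k)/2} = (-1)^{j+k+1}$ (a consequence of $(-1)^{(3^m-1)/2} = (-1)^m$, already noted in the proof of Theorem~\ref{Beq1}), the two sign factors $(-1)^{j+k}$ coming from $g^{-1}$ and $(-1)^{(3^j+3^k)/2}$ coming from the scaling collapse to a single $-1$, which the leading $-\theta$ (resp.\ $-\gamma$) cancels, leaving exactly $v(x)$ and $u(x)$ of~\eqref{u} and~\eqref{v}. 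Substituting these into the displayed combination yields the claimed $f^{-1}$.

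The main obstacle I anticipate is the cyclotomic-class bookkeeping that legitimizes the substitution for arbitrary $n$. Theorem~\ref{inv-hou} was stated for even $n$, where $-1 \in C_0$, whereas Lemma~\ref{W4.7} allows any $n \ge 1$, so $-1$ may lie in $C_1$; I must therefore verify that the closed form for $g^{-1}$ is valid as the restricted inverse of $g$ on the precise class reached after substitution, and that the reduction modulus $x^{(3^n-1)/2}+(-1)^{t+1}$ of Theorem~\ref{f-1-C01} matches the image class $t$ computed above. If the parity of $n$ causes trouble here, the safe fallback --- the route the phrase ``an argument similar to that used in Theorem~\ref{inv-hou}'' suggests --- is to apply Theorem~\ref{f-1-C01} directly to $f_0$ and $f_1$: expanding $f_1(x)^{w_i} = \beta^{w_i} x^{w_i}(x-\theta)^{2w_i}$ produces the very same binomial coefficients $\binom{2w_i}{u-w_i}$ and $\binom{2w_i}{v-w_i}$ as in Section~3, with $\alpha,\beta,\gamma,\theta$ entering only as multiplicative constants, so Corollary~\ref{3i_i-1=0} and Theorems~\ref{c-ne0} and~\ref{Beq1} transfer verbatim and only the constant-tracking differs.
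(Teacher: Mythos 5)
Your ``safe fallback'' is, in fact, the paper's actual proof: the authors set $f_s(x)=\tau x(x-\lambda)^2$, expand $f_s(x)^{w_i}=\sum_j \tau^{w_i}(-\lambda)^{2w_i-j}\binom{2w_i}{j}x^{w_i+j}$, and feed the coefficients into Theorem~\ref{f-1-C01}; the binomial coefficients are literally those of Section~3, so Corollary~\ref{3i_i-1=0} and Theorems~\ref{c-ne0} and~\ref{Beq1} transfer unchanged, and the constants collapse via $\tau^{(3^n-1)/2}=\eta(\tau)=(-1)^m$, $\lambda^{3^n}=\lambda$ and $(-1)^{s+t+m}=1$ to give $f_s^{-1}(x)=\sum_{j,k}\lambda(\tau^{-1}\lambda^{-3}x)^{(3^j+3^k)/2}$, after which Lemma~\ref{inv-twop} finishes exactly as you describe. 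Your class bookkeeping ($f_0\colon C_0\to C_m$, $f_1\colon C_1\to C_{1-m}$, selecting case (i) or (ii) of Lemma~\ref{inv-twop}, with $2^{-1}=-1$ producing the minus signs) is correct and matches the paper.

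Your primary route --- pulling everything back to $g(y)=y(y+1)^2$ via $f_1(x)=-\beta\theta^3g(-\theta^{-1}x)$ --- is algebraically sound where it applies: the substitution identity and the sign cancellation $(-1)^{j+k}(-1)^{(3^j+3^k)/2}=-1$ both check out. But the obstacle you flag is real, not cosmetic. Equation~\eqref{f1-1-ajk} is the inverse of $g$ restricted to $C_1$, derived under the standing hypothesis that $n$ is even, so that $g$ maps $C_1$ to $C_1$ and the reduction modulus is $x^{(3^n-1)/2}+1$. For odd $n$ one has $(-1)^{(3^n-1)/2}=-1$, hence $-\theta^{-1}\in C_1$ and your substitution carries $C_1$ onto $C_0$; the inverse of $g$ restricted to $C_0$ (with the other reduction modulus and the other sign of $(-1)^s$ in Theorem~\ref{f-1-C01}) is supplied nowhere in Sections~3--4, and obtaining it means rerunning the Lucas-theorem computation anyway. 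So the reduction only repackages the even-$n$ case and does not by itself prove the theorem for all $n\ge 1$; the proof you should actually write out is the fallback, which is the paper's.
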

\begin{proof}
Let $\tau, \lambda \in \F_{3^n}^{*}$, $s \in \{0,1\}$ and
$
f_{s}(x) = \tau(x^3 +\lambda x^2 + \lambda^2 x) = \tau x (x-\lambda)^{2}.
$
If $f(x)$ in Lemma~\ref{W4.7} is a PP of $\F_{3^n}$
and $\eta(\tau) =(-1)^{m}$ with $m \in \{0,1\}$,
then $f_{s}(x)$ induces a bijection from $C_{s}$ to $C_{t}$, where $s, t \in \{0,1\}$.
Clearly, $s=t$ if $m=0$, and $s \neq t$ if $m=1$.
Hence $s+t+m \equiv 0 \pmod{2}$.
Let $w_{i} = 3^n-1-i$, where $(3^{n}+1)/2 \leq i \leq 3^{n}-2$. Then
\begin{equation*}\label{fs^wi-gen}
  f_{s}(x)^{w_{i}}
  = (\tau x)^{w_{i}}(x-\lambda)^{2w_{i}}
  = \sum_{0 \leq j \leq 2w_{i}} \tau^{w_{i}}(-\lambda)^{2w_{i}-j}
  \binom{2w_{i}}{j} x^{w_{i}+j}.
\end{equation*}
By an argument similar to that used in the previous sections, we have
\begin{align*}
  f_{s}^{-1}(x)
  & = (-1)^{s+t} \Bigg(\sum_{0 \leq k \leq n-1}
        \tau^{\frac{3^n-1}{2}-3^{k}} \lambda^{3^n-3^{k+1}} x^{3^{k}} \\
  & \qquad  +\sum_{0 \leq j < k \leq n-1}
        2\tau^{\frac{3^n-1}{2}-\frac{3^j+3^k}{2}}
        \lambda^{3^n-3\frac{3^j+3^k}{2}} x^{\frac{3^j+3^k}{2}} \Bigg)\\
  & = (-1)^{s+t}\tau^{\frac{3^n-1}{2}}
       \sum_{0 \leq j, k \leq n-1}
      \lambda^{3^n} \big(\tau^{-1}\lambda^{-3} x \big)^{\frac{3^j+3^k}{2}}  \\
  & = (-1)^{s+t+m}
       \sum_{0 \leq j, k \leq n-1}
      \lambda \big(\tau^{-1}\lambda^{-3} x \big)^{\frac{3^j+3^k}{2}}  \\
  & = \sum_{0 \leq j, k \leq n-1}
      \lambda \big(\tau^{-1}\lambda^{-3} x \big)^{\frac{3^j+3^k}{2}}.
\end{align*}
Substituting it into Lemma~\ref{inv-twop} gives the desired result.
\end{proof}

\begin{lem}[{\cite[Theorem~4.2]{Wang-cyc}}]\label{W4.2}
Let $\alpha$, $\beta$, $\theta \in \F_{3^n}^{*}$ with $n \geq 1$, and let
\begin{equation*}\label{f4.2}
 f(x) =
 \begin{cases}
   0              & \text{if $x = 0$,} \\
   \alpha x^t          & \text{if $x \in C_0$,} \\
   \beta(x^3 +\theta x^2 +\theta^2 x)  & \text{if $x \in C_1$.}
 \end{cases}
\end{equation*}
Then $f(x)$ is a PP of $\F_{3^n}$ if and only if
$\big(t, \frac{3^n-1}{2}\big)=1$, $\eta(\theta)=1$ and $\eta(\alpha) =\eta(\beta)$.
\end{lem}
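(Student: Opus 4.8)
The plan is to apply the piecewise criterion of Lemma~\ref{inv-twop} with $q=3^n$, $f_0(x)=\alpha x^t$ and $f_1(x)=\beta(x^3+\theta x^2+\theta^2 x)$. Since the characteristic is three, $x^3+\theta x^2+\theta^2 x=x(x-\theta)^2$, so $f_1(x)=\beta x(x-\theta)^2$. By that criterion, $f$ is a PP of $\F_{3^n}$ if and only if each $f_s$ is injective on $C_s$ with $0\notin f_s(C_s)$, and $f_0(C_0)\cap f_1(C_1)=\emptyset$; I would show these three requirements are exactly the three stated conditions. For the monomial piece, $C_0$ is the cyclic group of nonzero squares of order $(3^n-1)/2$ and $\alpha\ne 0$, so $x\mapsto\alpha x^t$ is injective on $C_0$ and misses $0$ if and only if $\gcd\!\big(t,(3^n-1)/2\big)=1$; this yields the first condition.

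For the cubic piece, note $f_1(x)=0$ exactly when $x\in\{0,\theta\}$, so $0\notin f_1(C_1)$ forces $\theta\notin C_1$, i.e. $\eta(\theta)=1$. Conversely, when $\eta(\theta)=1$ the substitution $x=\theta y$ is a bijection of $C_1$ onto itself (as $\theta\in C_0$), and $f_1(\theta y)=\beta\theta^3\,g(y)$ with $g(y):=y(y-1)^2=y^3+y^2+y$, so injectivity of $f_1$ on $C_1$ is equivalent to injectivity of $g$ on $C_1$. This is the step I expect to be the main obstacle. I would treat a collision $g(y_1)=g(y_2)$ with $y_1\ne y_2$: subtracting and using $y_1^3-y_2^3=(y_1-y_2)^3$ in characteristic three, then cancelling the factor $y_1-y_2$, gives $(y_1-y_2)^2+(y_1+y_2)+1=0$. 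Writing $u=y_1-y_2$ and eliminating $y_1+y_2=-u^2-1$, the two roots come out as the clean expressions $(u+1)^2$ and $(u-1)^2$. Hence every colliding pair consists of two nonzero squares, so two distinct nonsquares can never collide and $g$ is injective on $C_1$ for every $n$; this is consistent with the parity hypothesis of Lemma~\ref{PP-3^n}, where $\theta=-1$ is a square precisely when $n$ is even.

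Finally, for the image-disjointness I would use the quadratic character. For $x\in C_0$ one has $\eta(f_0(x))=\eta(\alpha)$, and for $x\in C_1$, using $\eta(\theta)=1$ so that $x\ne\theta$ and $(x-\theta)^2\in C_0$, one gets $\eta(f_1(x))=\eta(\beta)\eta(x)=-\eta(\beta)$. Thus each image lies in a single coset, and by the injectivity already established each image fills that entire coset of size $(3^n-1)/2$. Therefore $f_0(C_0)$ and $f_1(C_1)$ are disjoint if and only if they sit in different cosets, i.e. $\eta(\alpha)\ne-\eta(\beta)$, equivalently $\eta(\alpha)=\eta(\beta)$. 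Combining the three equivalences with Lemma~\ref{inv-twop} gives the claim. The only genuinely nontrivial point is the injectivity of $g$ on $C_1$; everything else is bookkeeping with the quadratic character.
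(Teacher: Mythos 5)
Your proof is correct, but there is nothing in this paper to compare it against: Lemma~\ref{W4.2} is imported verbatim from \cite[Theorem~4.2]{Wang-cyc} and the authors give no proof of it here --- it is used as a black box whose inverse is then computed in Theorem~\ref{inv-4.2}. What you have written is therefore a self-contained, elementary derivation of the cited result via the piecewise criterion of Lemma~\ref{inv-twop}, rather than a reconstruction of an argument the paper contains. I checked the details and they hold up. The monomial piece and the $0\notin f_1(C_1)\Leftrightarrow\eta(\theta)=1$ step are routine. The one step that carries real content --- injectivity of $g(y)=y(y-1)^2$ on the nonsquares --- is right: from $g(y_1)=g(y_2)$, $y_1\ne y_2$, one gets $(y_1-y_2)^2+(y_1+y_2)+1=0$, and with $u=y_1-y_2$ and $2^{-1}=-1$ in characteristic three, $y_1=-(-u^2-1+u)=u^2-u+1=(u+1)^2$ and $y_2=u^2+u+1=(u-1)^2$, consistent with $(u+1)^2-(u-1)^2=4u=u$; so every colliding pair consists of squares and no two distinct nonsquares collide. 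The character computation $\eta(f_0(x))=\eta(\alpha)$ on $C_0$ and $\eta(f_1(x))=-\eta(\beta)$ on $C_1$, combined with the images being full cosets of $C_0$ by injectivity, correctly converts disjointness into $\eta(\alpha)=\eta(\beta)$. The original source \cite{Wang-cyc} works in the general framework of cyclotomic mappings of index $d$; your argument buys a direct, characteristic-three-specific proof at the cost of not generalizing beyond $d=2$, which is all that is needed here.
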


\begin{thm}\label{inv-4.2}
If $f(x)$ in Lemma~\ref{W4.2} is a PP of $\F_{3^n}$
and $\eta(\alpha) =(-1)^{m}$ with $m \in \{0,1\}$, then
\[
  f^{-1}(x)
  = - u(x)\big(1+(-1)^{m}   x^{(3^n-1)/2}\big)
    - v(x)\big(1+(-1)^{m+1} x^{(3^n-1)/2}\big),
\]
where $v(x)$ is defined by~\eqref{v}, $u(x) = (\alpha^{-1}x)^s$,
and $s$ is the inverse of $t$ modulo $(3^n-1)/2$.
\end{thm}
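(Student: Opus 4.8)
The plan is to apply Lemma~\ref{inv-twop} in exactly the same way as in the proof of Theorem~\ref{inv-4.7}, exploiting that the two pieces of $f(x)$ are handled independently. The second piece $f_{1}(x) = \beta(x^3 + \theta x^2 + \theta^2 x)$ is \emph{identical} to the $C_1$-piece treated in Theorem~\ref{inv-4.7} (it is $f_s(x)=\tau x(x-\lambda)^2$ with $\tau=\beta$, $\lambda=\theta$), so its restricted inverse on $C_1$ has already been computed there and equals $v(x)$ in~\eqref{v}. Moreover the PP hypothesis in Lemma~\ref{W4.2} forces $\eta(\beta)=\eta(\alpha)=(-1)^m$, so the parity identity $s+t+m\equiv 0 \pmod 2$ that produced the clean (sign-free) form of $v(x)$ is unchanged. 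Consequently the only genuinely new ingredient is the restricted inverse on $C_0$ of the monomial piece $f_0(x)=\alpha x^t$.

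First I would verify that $u(x)=(\alpha^{-1}x)^s$, with $s$ the inverse of $t$ modulo $(3^n-1)/2$, is the required restricted inverse of $f_0$ on $C_0$. Such an $s$ exists precisely because the PP condition of Lemma~\ref{W4.2} gives $\gcd\big(t,(3^n-1)/2\big)=1$. The two defining properties demanded by Lemma~\ref{inv-twop} are then immediate: $u(0)=0$, and for any $c\in C_0$ one has $u(f_0(c)) = (\alpha^{-1}\cdot\alpha c^t)^s = c^{ts}$, which equals $c$ because $C_0$ is cyclic of order $(3^n-1)/2$ and $ts\equiv 1 \pmod{(3^n-1)/2}$.

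Next I would pin down the mapping direction via the quadratic character, as in Theorem~\ref{inv-4.7}. For $c\in C_0$ we have $\eta(f_0(c))=\eta(\alpha)\eta(c)^t=\eta(\alpha)=(-1)^m$, so $f_0$ sends $C_0$ into $C_0$ when $m=0$ and into $C_1$ when $m=1$; the same dichotomy holds for $f_1$, since $\eta(f_1(c))=\eta(\beta)\eta(c)\eta\big((c-\theta)^2\big)=-\eta(\beta)=(-1)^{m+1}$ for $c\in C_1$ (here $c\neq\theta$ because $\theta\in C_0$ while $c\in C_1$). Hence case~(i) of Lemma~\ref{inv-twop} applies when $m=0$ and case~(ii) when $m=1$. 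Substituting $f_0^{-1}=u$ and $f_1^{-1}=v$ into the appropriate formula and using $\tfrac12=-1$ in characteristic three collapses both cases into the single expression $-u(x)\big(1+(-1)^m x^{(3^n-1)/2}\big)-v(x)\big(1+(-1)^{m+1}x^{(3^n-1)/2}\big)$, which is the assertion.

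The computation is routine once Theorem~\ref{inv-4.7} is available, so I do not anticipate a serious obstacle; the one point demanding care is the sign and parity bookkeeping, namely matching the $(-1)^m$ factors emitted by the two cases of Lemma~\ref{inv-twop} against the unified formula, and confirming that replacing the cubic $C_0$-piece by the monomial $\alpha x^t$ alters only $u(x)$ while leaving $v(x)$ intact.
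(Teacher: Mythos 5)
Your proposal is correct and follows essentially the route the paper intends: the paper gives no separate proof of Theorem~\ref{inv-4.2}, deferring to ``an argument similar to'' Theorem~\ref{inv-4.7}, and your argument is exactly that --- reuse the computation of the cubic piece to get $v(x)$, check directly that $u(x)=(\alpha^{-1}x)^s$ inverts $\alpha x^t$ on the cyclic group $C_0$ of order $(3^n-1)/2$, and sort the two cases of Lemma~\ref{inv-twop} by the value of $\eta(\alpha)=(-1)^m$. The sign bookkeeping (including $\tfrac12=-1$ in characteristic three and the parity identity $s+t+m\equiv 0\pmod 2$ that keeps $v(x)$ sign-free) checks out.
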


\begin{lem}[{\cite[Corollary~4.5]{Wang-cyc}}] \label{W4.5}
Let $n$, $t$ be positive integers.
Let $\alpha$, $\beta$, $\gamma\in \F_{3^n}^{*}$ and
\begin{equation*}\label{f4.5}
 f(x) =
 \begin{cases}
   0                                    & \text{if $x = 0$,} \\
   \alpha(x^3 +\gamma x^2 +\gamma^2 x)  & \text{if $x \in C_0$,} \\
   \beta x^t                            & \text{if $x \in C_1$.}
 \end{cases}
\end{equation*}
Then $f(x)$  is a PP of $\F_{3^n}$ if and only if
$\big(t, \frac{3^n-1}{2}\big)=1$, $\eta(\gamma)=-1$ and $\eta(\alpha) =\eta(\beta)(-1)^{t+1}$.
\end{lem}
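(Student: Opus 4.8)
The plan is to prove the characterization directly from the piecewise permutation criterion of Lemma~\ref{inv-twop}, instead of invoking the cyclotomic-mapping framework of \cite{Wang-cyc}. Setting $f_0(x) = \alpha(x^3+\gamma x^2+\gamma^2 x) = \alpha x(x-\gamma)^2$ and $f_1(x) = \beta x^t$, that criterion reduces the problem to three requirements: $f_0$ is injective on $C_0$ with $0 \notin f_0(C_0)$; $f_1$ is injective on $C_1$ with $0 \notin f_1(C_1)$; and $f_0(C_0) \cap f_1(C_1) = \emptyset$. I would analyse the two pieces in turn and only then impose the disjointness.

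The monomial piece is straightforward. As $\beta \neq 0$ and $0 \notin C_1$, we have $0 \notin f_1(C_1)$ for free. Writing $\F_{3^n}^{*} = \langle g\rangle$ and $C_1 = \{g^{2j+1}\}$, the assignment $x \mapsto x^t$ sends $g^{2j+1}$ to $g^{(2j+1)t}$, so $f_1$ is injective on $C_1$ exactly when $(j-j')t \equiv 0 \pmod{(3^n-1)/2}$ forces $j \equiv j' \pmod{(3^n-1)/2}$, which happens if and only if $\gcd\!\big(t,(3^n-1)/2\big)=1$. Since $\eta(\beta x^t)=\eta(\beta)(-1)^t$ for every $x \in C_1$, under this gcd condition $f_1(C_1)$ is the entire coset whose quadratic character equals $\eta(\beta)(-1)^t$.

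The cubic piece carries the real content. Using $x^3-y^3=(x-y)^3$ in characteristic three, I would factor
\[
 f_0(x)-f_0(y)=\alpha(x-y)\big[(x-y)^2+\gamma(x+y)+\gamma^2\big],
\]
so a collision $x\neq y$ on $C_0$ is equivalent to $(x-y)^2+\gamma(x+y)+\gamma^2=0$. Substituting $x=\gamma a$, $y=\gamma b$ turns this into $(a-b)^2+(a+b)+1=0$, and solving with $d:=a-b$ yields the clean identities $a=(d+1)^2$ and $b=(d-1)^2$. The decisive point is that $a$ and $b$ are then both squares, so $\eta(a)=\eta(b)=1$; as $x=\gamma a\in C_0$ forces $\eta(a)=\eta(\gamma)$, a genuine collision inside $C_0$ is possible if and only if $\eta(\gamma)=1$. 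Consequently $f_0$ is injective on $C_0$ precisely when $\eta(\gamma)=-1$, and this also places $\gamma$ in $C_1$, so $x-\gamma\neq 0$ on $C_0$ and hence $0\notin f_0(C_0)$. I expect this step, recognising that the collision locus consists entirely of squares, to be the main obstacle; the rest is bookkeeping. As a consistency check, the same computation read on $C_1$ reproduces the hypothesis $\eta(\theta)=1$ of Lemmas~\ref{W4.7} and~\ref{W4.2}.

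To finish, I would assemble the image cosets. When $\eta(\gamma)=-1$ the map $x\mapsto x(x-\gamma)^2$ is a bijection of $C_0$, so $f_0(C_0)=\alpha C_0$ is the full coset with character $\eta(\alpha)$, while $f_1(C_1)$ is the full coset with character $\eta(\beta)(-1)^t$. Two full cosets are disjoint exactly when they differ, i.e. when $\eta(\alpha)\neq\eta(\beta)(-1)^t$, equivalently $\eta(\alpha)=\eta(\beta)(-1)^{t+1}$. Collecting the three requirements gives precisely $\gcd\!\big(t,(3^n-1)/2\big)=1$, $\eta(\gamma)=-1$ and $\eta(\alpha)=\eta(\beta)(-1)^{t+1}$, as claimed. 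A separate remark is warranted for $n=1$, where $C_0$ is a single point and injectivity of $f_0$ is automatic; there the gcd and disjointness conditions already pin down the characterization.
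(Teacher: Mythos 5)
Your argument is sound and, unlike the paper, actually proves the statement: the paper imports Lemma~\ref{W4.5} verbatim as \cite[Corollary~4.5]{Wang-cyc} and offers no proof, whereas \cite{Wang-cyc} derives it from a general cyclotomic-mapping criterion (injectivity of induced maps on cosets of the index-$2$ subgroup, phrased via discrete logarithms). Your route through Lemma~\ref{inv-twop} is more elementary and self-contained: the reduction of the monomial piece to $\gcd(t,(3^n-1)/2)=1$ and of the disjointness to $\eta(\alpha)\neq\eta(\beta)(-1)^t$ is exactly right, and the key computation --- factoring $f_0(x)-f_0(y)=\alpha(x-y)\bigl[(x-y)^2+\gamma(x+y)+\gamma^2\bigr]$ via $x^3-y^3=(x-y)^3$ and observing that the collision locus is parametrised by $a=(d+1)^2$, $b=(d-1)^2$, hence consists of squares --- is correct and is the genuine content. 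What the citation buys the paper is brevity and uniformity with Lemmas~\ref{W4.7} and~\ref{W4.2}; what your argument buys is a transparent explanation of \emph{why} $\eta(\gamma)=-1$ (and dually $\eta(\theta)=1$) appears, which the paper's later inversion computations implicitly rely on.

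One small repair is needed in the necessity direction for $\eta(\gamma)=-1$. Your collision argument requires some $d\notin\{0,1,-1\}$, so it only applies for $n>1$, and your closing remark that for $n=1$ ``the gcd and disjointness conditions already pin down the characterization'' is not correct: for $n=1$ and $\gamma=1$ one has $f_0(1)=\alpha\cdot 3=0=f(0)$, so $f$ fails to be a PP even though $f_0$ is trivially injective on the singleton $C_0$. The uniform fix is already latent in your write-up: if $\eta(\gamma)=1$ then $\gamma\in C_0$ and $f_0(\gamma)=\alpha\gamma(\gamma-\gamma)^2=0$, violating the requirement $0\notin f_0(C_0)$ of Lemma~\ref{inv-twop} for every $n\geq 1$. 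With that one line in place of (or alongside) the collision count, the proof is complete for all $n$.
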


\begin{thm}\label{inv-4.5}
If $f(x)$ in Lemma~\ref{W4.5} is a PP of $\F_{3^n}$ and
$\eta(\alpha) =(-1)^{m}$ with $m \in \{0,1\}$, then
\[
  f^{-1}(x)
  = - u(x)\big(1+(-1)^{m}   x^{(3^n-1)/2}\big)
    - v(x)\big(1+(-1)^{m+1} x^{(3^n-1)/2}\big),
\]
where $u(x)$ is defined by~\eqref{u}, $v(x) = (-1)^r(\beta^{-1}x)^s$,
$r$ and $s$ are integers such that $1 \leq s < (3^n-1)/2$ and $st +r(3^n-1)/2 =1$.
\end{thm}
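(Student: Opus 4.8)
The plan is to follow the same piecewise strategy used in Theorems~\ref{inv-4.7} and~\ref{inv-4.2}: decompose $f$ into its two pieces, invert each piece on its coset, and reassemble via Lemma~\ref{inv-twop}. Writing $q=3^n$, the two pieces are
\[
f_0(x) = \alpha(x^3 + \gamma x^2 + \gamma^2 x) = \alpha x (x-\gamma)^2 \ \text{ on } C_0, \qquad f_1(x) = \beta x^t \ \text{ on } C_1.
\]
First I would pin down the mapping directions, since these select the relevant branch of Lemma~\ref{inv-twop}. As $\eta$ is multiplicative and the square factor $(x-\gamma)^2$ is harmless, for $x\in C_0$ one has $\eta(f_0(x)) = \eta(\alpha) = (-1)^m$, so $f_0$ maps $C_0$ into $C_m$; for $x\in C_1$, using $\eta(x^t)=(-1)^t$ together with the PP condition $\eta(\alpha)=\eta(\beta)(-1)^{t+1}$ of Lemma~\ref{W4.5}, one gets $\eta(f_1(x)) = \eta(\beta)(-1)^t = (-1)^{m+1}$, so $f_1$ maps $C_1$ into $C_{1-m}$. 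Hence both pieces preserve the cosets when $m=0$ (part (i) of Lemma~\ref{inv-twop}) and swap them when $m=1$ (part (ii)).

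For the cubic piece the computation is exactly the one carried out in the proof of Theorem~\ref{inv-4.7} with $\tau=\alpha$ and $\lambda=\gamma$, and it yields $f_0^{-1}(x)=u(x)$ with $u$ as in~\eqref{u}; I would invoke that calculation verbatim rather than repeat it. The only genuinely new point is the monomial piece $f_1(x)=\beta x^t$, which now sits on the non-square coset $C_1$ instead of on the subgroup $C_0$ as in Theorem~\ref{inv-4.2}. Because $\big(t,(q-1)/2\big)=1$, I would pick integers $r,s$ with $st + r\tfrac{q-1}{2}=1$ and $1\le s<(q-1)/2$; then for $x\in C_1$ the identity $x^{(q-1)/2}=-1$ gives
\[
\big(\beta^{-1}f_1(x)\big)^s = x^{st} = x^{1 - r(q-1)/2} = x\,(x^{(q-1)/2})^{-r} = (-1)^r x,
\]
so that $f_1^{-1}(x) = (-1)^r(\beta^{-1}x)^s = v(x)$, which also satisfies $v(0)=0$.

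Finally I would substitute $f_0^{-1}=u$ and $f_1^{-1}=v$ into the appropriate branch of Lemma~\ref{inv-twop} and use $2^{-1}=-1$ in $\F_3$ to clear the factors of $\tfrac12$; for $m=0$ this is part (i) and for $m=1$ part (ii), and in both cases the outcome collapses into the single stated formula, with the signs on $x^{(q-1)/2}$ controlled by $(-1)^m$ and $(-1)^{m+1}$. The main obstacle I anticipate is precisely the monomial step: on the coset $C_1$ the exponent $s$ inverting $t$ modulo $(q-1)/2$ no longer suffices on its own, and one must carefully track the extra sign $(-1)^r$ arising from $x^{r(q-1)/2}=(-1)^r$. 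This correction is exactly what distinguishes the present statement from Theorem~\ref{inv-4.2}, where the monomial lived on the subgroup $C_0$ and no such sign appeared.
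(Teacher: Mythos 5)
Your proposal is correct and follows exactly the argument the paper intends (the paper omits the proof of Theorem~\ref{inv-4.5}, deferring to the method of Theorem~\ref{inv-4.7} and Lemma~\ref{inv-twop}): the coset-direction analysis via $\eta$, the reuse of the cubic-piece inversion with $\tau=\alpha$, $\lambda=\gamma$, and the sign $(-1)^r$ from $x^{r(3^n-1)/2}=(-1)^r$ on $C_1$ are all precisely the intended steps. No gaps.
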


When $n=1$, it is easy to verify that
$f(x) \equiv f^{-1}(x) \equiv \pm x \pmod{x^{3}-x}$
in all results of this section.
Hence Theorems~\ref{inv-4.7}, \ref{inv-4.2} and \ref{inv-4.5} are all true for $n=1$.
If $\gamma =\theta =0$, then Lemmas~\ref{W4.7}, \ref{W4.2} and~\ref{W4.5}
are the special cases of \cite[Corollary~2.3]{Wang-cyc},
and their inverses are given in~\cite{Wang-cyc2,Zheng-2}.

\section*{Acknowledgments}
We are grateful to the referees for many useful comments and suggestions.

\section*{References}

\small


\end{document}